\newcommand\Def[1]{\textbf{\color{black}#1}}
\newcommand\N{\mathbb{N}}
\newcommand\R{\mathbb{R}}
\newcommand\WW{\mathcal W}
\newcommand\MW{\widehat{\WW}}
\DeclareMathOperator{\vol}{vol}
\newcommand\lxrightarrow[1]{\xrightarrow{\raisebox{-2pt}[0pt][0pt]{$\scriptstyle#1$}}}
\newcommand\ixrightarrow[1]{\overset{\raisebox{-2pt}[0pt][0pt]{$\scriptscriptstyle#1$}}\to}
\newcommand\op{\mathrm{op}}
\newcommand\onehat{\hat{1}}
\newcommand\zerohat{\hat{0}}
\newcommand\Ord{\mathcal{O}}
\newcommand\Chain{\mathcal{C}}
\tikzset{>/.tip={Straight Barb[angle=90:2.5pt 1]}}
\tikzset{posetelm/.style={draw, fill, circle, minimum size=4pt, inner sep=0}}
\tikzset{posetelmm/.style={draw, thick, minimum size=5pt, inner sep=0}}
\tikzset{marking/.style={red}}
\tikzset{elmname/.style={blue}}
\tikzset{netedge/.style={thick,-latex}}
\tikzset{
    dot diameter/.store in=\dot@diameter,
    dot diameter=1.2pt,
    dot spacing/.store in=\dot@spacing,
    dot spacing=6pt,
    dots/.style={
        line width=\dot@diameter,
        line cap=round,
        dash pattern=on 0pt off \dot@spacing,
        shorten <=2pt,
        shorten >=2pt
    }
}
\newtheorem{thm}{Theorem}[section]
\newtheorem{cor}[thm]{Corollary}
\newtheorem{lem}[thm]{Lemma}
\newtheorem{prop}[thm]{Proposition}
\theoremstyle{definition}
\newtheorem{definition}[thm]{Definition}
\newtheorem{example}[thm]{Example}
\newtheorem{rem}[thm]{Remark}
\newtheorem{quest}[thm]{Question}
\title[On PL-homeomorphisms between distributive and anti-blocking polyhedra]{%
On piecewise-linear homeomorphisms between\\ distributive and anti-blocking polyhedra}
\author{Christoph Pegel}
\address{Institut f\"ur Algebra, Zahlentheorie und Diskrete Mathematik, Leibniz Universit\"at Hannover, Germany}
\email{pegel@math.uni-hannover.de}
\author{Raman Sanyal}
\address{Institut f\"ur Mathematik, Goethe-Universit\"at Frankfurt, Germany}
\email{sanyal@math.uni-frankfurt.de}
\keywords{
order polytopes, chain polytopes, distributive polyhedra, anti-blocking
polyhedra, piecewise-linear maps, marked networks}
\subjclass[2010]{
52B12, 
05C20, 
52A41} 
\date{\today}
\begin{document}

\begin{abstract}
    Stanley (1986) introduced the order polytope and chain polytope of a
    partially ordered set and showed that they are related by a
    piecewise-linear homeomorphism. In this paper we view order and chain
    polytopes as instances of distributive and anti-blocking polytopes,
    respectively. Both these classes of polytopes are defined in terms of the
    componentwise partial order on $\R^n$. We generalize Stanley's
    PL-homeomorphism to a large class of distributive polyhedra using infinite
    walks in marked networks.
\end{abstract}

\maketitle

\section{Introduction}\label{sec:intro}

Let $(P,\preceq)$ be a finite partially ordered set (\Def{poset}, for short).
Stanley~\cite{Stanley} introduced two convex polytopes associated to $P$, the
\Def{order polytope} 
\begin{equation}\label{eqn:Ord}
    \Ord(P) \ \coloneqq \ \left\{ f \in \R^{P} : 
    \begin{array}{cl} 
        0 \ \le \ f(a) \ \le \ 1 & \text{ for all } a \in P\\
        f(a) \ \le \ f(b) & \text{ for all } a \prec b \\
    \end{array} \right\}
\end{equation}
and the \Def{chain polytope} 
\begin{equation}\label{eqn:Chain}
    \Chain(P) \ \coloneqq \ \left\{ g \in \R^{P} : 
    \begin{array}{cl} 
        g(a) \ \ge \ 0 & \text{ for all } a \in P\\
        g(a_1) + g(a_2) + \cdots + g(a_k) \ \le \ 1
         & \text{ for all } a_1 \prec a_2 \prec \cdots \prec a_k
    \end{array} \right\} \, .
\end{equation}
The poset can be completely recovered from $\Ord(P)$ and many geometric
properties of $\Ord(P)$ can be translated into combinatorial properties of
$P$. In particular, the Ehrhart polynomial of $\Ord(P)$ is the order
polynomial of $P$ and the normalized volume $(|P|)! \cdot \vol(\Ord(P))$
is the number of linear extensions of $P$. We refer the reader to Stanley's
original paper and~\cite[Ch.~6]{crt} for more details.  So it is fair to say
that the order polytope $\Ord(P)$ gives a geometric representation of $P$.
The chain polytope, on the other hand, is defined in terms of the
\Def{comparability graph} $G_P = (P,E)$ of $P$. Two elements $a,b \in
P$ share an edge in $G$ if and only if $a \prec b$ or $b \prec a$. Chains in
$P$ correspond to cliques in $G$. The comparability graph can be recovered
from $\Chain(P)$ but $P$ is in general not determined by $G_P$.  Stanley
defines a piecewise-linear (PL) homeomorphism $\phi \colon \R^P \to \R^P$
called the \Def{transfer map} that is volume- and lattice preserving and that
maps $\Ord(P)$ to $\Chain(P)$.  This shows, quite unexpectedly, that both
polytopes have the same Ehrhart polynomial and normalized volume and,
consequently, that order polynomial and number of linear extensions only depend
on the comparability graph.  Order and chain polytopes have many applications
in combinatorics as well as in geometry and, together with their connecting
PL-homeomorphism, have been generalized to marked posets~\cite{ABS,JS,Pegel},
to marked chain-order polytopes~\cite{XinFourier,FFLP}, and to double
posets~\cite{CFS}, to name a few. The aim of this paper is to give a
generalization of Stanley's transfer map to a larger class of geometric objects
that we now define.

Let $V$ be some finite set and $\R^V$ equipped with the usual componentwise
partial order $\le$. A convex polyhedron $Q \subseteq \R^V_{\ge 0}$ is called
\Def{anti-blocking}~\cite{Ful71} or a \Def{convex
corner}~\cite{brightwell} if for $y \in Q$ and $x \in \R^V_{\ge0}$
\begin{equation}\label{eqn:AB}
    x \le y \quad \Longrightarrow \quad x \in Q \,
    .
\end{equation}
The chain polytope is easily seen to be anti-blocking. An \Def{order ideal} in
a poset is a subset that is down-closed with respect to the partial order.
Condition~\eqref{eqn:AB} thus states that anti-blocking polyhedra can be
viewed as \Def{convex order ideals} in $(\R^n_{\ge0},\le)$.

For $x,y \in \R^V$, let us write $x \wedge y$ and $x \vee y$ for the
coordinate-wise minimum and maximum, respectively. In particular,
$(\R^V,\wedge,\vee)$ is an (infinite) distributive lattice with meet $\wedge$
and join $\vee$. It is straightforward to verify that $\Ord(P)$ is closed
under meets and joins. Thus $\Ord(P)$ is a polyhedron as well as a sublattice
of $\R^P$. Such polyhedra were introduced by Felsner and Knauer~\cite{FK11}
under the name \Def{distributive polyhedra}.  Felsner and Knauer noted that
order polytopes and, more generally, alcoved polytopes~\cite{LP} are
distributive. Since marked order polytopes are coordinate sections of dilated
order polytopes, they are automatically distributive. There are many other
polyhedra in combinatorics that turn out to be distributive. For example, the
$t$-Cayley and $t$-Gayley polytopes of Konvalinka and Pak~\cite{KP1}, the
$s$-lecture hall polytopes and cones of Bousquet-M\'elou and
Eriksson~\cite{BE97, BE97a}, and their poset generalizations due to
Br\"and\'en--Leander~\cite{BL}. See \Cref{sec:apps} for more on these
classes of examples.

Stanley's piecewise-linear homeomorphism connects the distributive polytopes
$\Ord(P)$ to the anti-blocking polytope $\Chain(P)$ with phenomenal
combinatorial consequences. Similar PL-maps have been constructed in other
contexts. For example, the polytope $P_n(x)$ studied by Pitman--Stanley is an
anti-blocking polytope and a linear isomorphism to a distributive polytope is
constructed in~\cite[Sect.~4]{PS}. Beck, Braun, and Le~\cite{BBL} introduced
\Def{Cayley polytopes} $C_n$ (denoted by $\mathbf{A}_n$
in~\cite{KP2}) as 
\[
    C_n \ = \ \{ x \in \R^n : 
    1 \le x_i \le 2 x_{i-1} \text{ for all } 1 \le i \le n \}\, ,
\]
where $x_0 \coloneqq 1$. This is a distributive polytope.  In~\cite{KP2},
Konvalinka and Pak define an anti-blocking polytope $\mathbf{Y}_n$ as the set
of all $y \in \R^n$ with $y \ge 0$ and for all $1 \le h \le n$
\[
    \sum_{j=1}^h 2^{h - j} y_j \le 2^{h} - 1
\]
and a linear lattice-preserving map $\phi \colon \R^n \to \R^n$ with
$\phi(\mathbf{Y}_n) = C_n$ to give a simple proof of a conjecture of Braun on
partitions~\cite{BBL}. 

In this paper, we study the relation between distributive and anti-blocking
polyhedra more closely and we construct PL-homeomorphisms for a large class of
distributive polyhedra that subsumes marked order polyhedra. Our PL-maps
generalize Stanley's original construction as well as the mentioned examples
and depends on the convergence of series given by infinite walks in directed
networks.  Most of the work presented here also appeared in the first authors
PhD~thesis~\cite{ThesisPegel}.

\textbf{Acknowledgements.} The second author wants to thank Kolja Knauer and
Martin Skutella for fruitful discussions.

\section{Distributive polyhedra and marked networks}\label{sec:distr}

A \Def{marked network} is a tuple $\Gamma = (V\uplus A, E, \alpha, c,
\lambda)$. It consists of a finite loop-free directed multigraph $(V \uplus A,
E)$ on nodes $V\uplus A$ with edges $E$. We refer to the nodes in $A$ as
\Def{marked nodes} with  \Def{marking} $\lambda \in \R^A$. To every directed
edge $v\lxrightarrow{e}w$ there are two associated weights $\alpha_e, c_e \in
\R$ with $\alpha_e > 0$. In drawings of a marked network, we will depict an
edge $v\lxrightarrow{e}w$ with weights $\alpha_e$ and $c_e$ as
\begin{equation*}
    \begin{tikzpicture}[xscale=.6,baseline=(z.south)]
            \path (-1,0) node[posetelm] (A) {} node[left=2pt,elmname] {\(v\)};
            \path (1,0) node[posetelm] (B) {} node[right=2pt,elmname] {\(w\)};
            \draw[netedge] (A) to node[midway, above=-2pt] (z) {\(\scriptstyle \alpha_e,c_e\)} (B);
    \end{tikzpicture},
\end{equation*} where blue labels are node names. Marked nodes are drawn as
squares with red labels and when edge weights are omitted, we always assume
$\alpha_e=1$ and $c_e=0$. See~\Cref{fig:cayley-network,subfig:bad-network}.

To a marked network,
we associate the polyhedron $\Ord(\Gamma) \subseteq \R^{V}$ consisting of all
points $x \in \R^{V}$ such that
\begin{equation}\label{eqn:distr_ineqs}
    \alpha_e x_w + c_e \ \le \ x_v \quad 
    \text{for all edges } v\lxrightarrow{e}w \, ,
\end{equation}
where we set $x_v\coloneqq \lambda_v$ for $v\in A$.

\begin{example}[Marked order polyhedra]\label{ex:poset}
    For a poset $(P,\preceq)$ let $\widehat{P} = P \uplus \{\zerohat, \onehat\}$
    be the poset with minimum $\zerohat$ and maximum $\onehat$. A marked
    network is obtained from the Hasse diagram of $\widehat{P}$ with $A
    \coloneqq \{\zerohat, \onehat\}$, $V\coloneqq P$, and $E$ consisting of
    edges $v\to w$ for $w$ covered by $v$. Setting $\alpha \equiv 1$, $c
    \equiv 0$ and $(\lambda_{\zerohat}, \lambda_{\onehat}) = (0,1)$, we obtain
    the order polytope $\Ord(P)$.  By allowing more general $A$, this yields
    the marked order polyhedra~\cite{ABS,Pegel}. 
\end{example}

In a similar fashion one sees that the Cayley polytope \(C_n\) is also of the form
$\Ord(\Gamma)$ for the simple network given in \Cref{fig:cayley-network}.
It is straightforward to verify that $\Ord(\Gamma)$ is a distributive
polyhedron. The main result in~\cite{FK11} is a characterization of
distributive polyhedra in terms of marked networks.

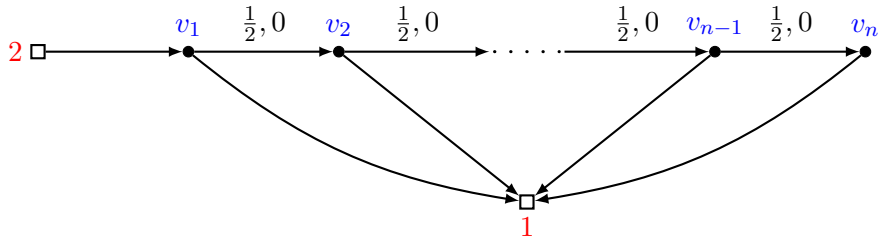
\begin{figure}[ht]
    \centering
    \begin{tikzpicture}[x=2cm]
        \path (0,0) node (2) [posetelmm] {} node [marking,left=2pt] {\(2\)};
        \path (1,0) node (v1) [posetelm] {} node [elmname,above=2pt] {\(v_1\)};
        \path (2,0) node (v2) [posetelm] {} node [elmname,above=2pt] {\(v_2\)};
        \path (4.5,0) node (vn-1) [posetelm] {} node [elmname,above=2pt] {\(v_{n-1}\)};
        \path (5.5,0) node (vn) [posetelm] {} node [elmname,above=2pt] {\(v_n\)};
        \path (3.25,-2) node (1) [posetelmm] {} node [marking,below=2pt] {\(1\)};

        \begin{scope}[every path/.style={netedge}]
            \draw (2) -- (v1);
            \draw (v1) to node[midway,above] {\(\tfrac 1 2, 0\)} (v2);
            \draw (v2) to node[midway,above] {\(\tfrac 1 2, 0\)} (3,0);
            \draw (3.5,0) to node[midway,above] {\(\tfrac 1 2, 0\)} (vn-1);
            \draw (vn-1) to node[midway,above] {\(\tfrac 1 2, 0\)} (vn);
            \draw (v1) to[bend right=15] (1);
            \draw (v2) -- (1);
            \draw (vn-1) -- (1);
            \draw (vn) to[bend left=15] (1);
        \end{scope}
        \draw[thick,dots] (3,0) -- (3.5,0);
    \end{tikzpicture}
    \caption[Marked network for the Cayley polytope $C_n$]{The marked network defining the Cayley polytope $C_n$.}
    \label{fig:cayley-network}
\end{figure}

\begin{thm}[{\cite[Thm.~4]{FK11}}] Every distributive polyhedron is of the
    form $\Ord(\Gamma)$ for some marked network $\Gamma$.
\end{thm}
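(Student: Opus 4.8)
The plan is to read off from the closure of $Q$ under $\wedge$ and $\vee$ that both the affine hull $\operatorname{aff}(Q)$ and every facet of $Q$ are cut out by relations involving at most two coordinates with opposite signs — together with relations fixing a single coordinate — and that these are exactly the relations encoded by edges and marked nodes of a network. The first ingredient is a localization principle: if $F = Q\cap\{\langle a,\cdot\rangle = b\}$ is any face of $Q$, then $F$ is again a sublattice. Indeed, for $x,y\in F$ both $x\wedge y$ and $x\vee y$ lie in $Q$, so $\langle a,x\wedge y\rangle\le b$ and $\langle a,x\vee y\rangle\le b$; as these two values sum to $\langle a,x\rangle+\langle a,y\rangle=2b$, both must equal $b$, whence $x\wedge y,x\vee y\in F$. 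Now pick $x$ in the relative interior of such a face and a nearby $y$ in it; the points $x\vee y$ and $x\wedge y$ differ from $x$ by the positive and negative parts of $y-x$, these differences lie in the linear space $L$ parallel to the face, and as $y$ ranges over the face the vectors $y-x$ sweep out $L$. Using $(tv)^{+}=t\,v^{+}$ for $t>0$ to remove the smallness restriction, it follows that $L$ is a linear subspace of $\R^{V}$ closed under coordinatewise positive parts. I would apply this to $L=\operatorname{lin}(Q)\coloneqq\operatorname{aff}(Q)-\operatorname{aff}(Q)$ and to the space parallel to each facet of $Q$.

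The combinatorial core is the classification of such subspaces: a linear subspace $L\subseteq\R^{V}$ closed under positive parts is a direct sum $\R r^{(1)}\oplus\cdots\oplus\R r^{(m)}$ of lines spanned by nonnegative vectors $r^{(k)}$ with pairwise disjoint supports; equivalently $L=\{x\in\R^{V}: x_v=0\text{ for }v\notin S\text{, and }x_v=\gamma_{vw}x_w\text{ for }v,w\text{ in a common block}\}$ for a partition of some $S\subseteq V$ into blocks and positive scalars $\gamma_{vw}$. I would prove this by studying the polyhedral cone $L_{\ge0}=L\cap\R^{V}_{\ge0}$, which is closed under $\wedge$ and $\vee$: if $r$ generates an extremal ray of this cone, then for every $x\in L_{\ge0}$ one has $x\wedge r\in L_{\ge0}$ and $x\wedge r\le r$, which forces $x\wedge r$ onto the ray; scaling $x$ down until $x\wedge r\ne r$ then shows that $x|_{\operatorname{supp}(r)}$ is proportional to $r|_{\operatorname{supp}(r)}$, first for $x\in L_{\ge0}$ and then, via $x=x^{+}-x^{-}$, for all $x\in L$. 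Hence distinct extremal rays have disjoint supports and span $L$. In the special case $L=a^{\perp}$ of a hyperplane this says precisely that $a$ has at most one positive and at most one negative entry.

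Putting the pieces together: $\operatorname{lin}(Q)$ has the block structure above, so, choosing one representative coordinate $\rho_k$ per block, $\operatorname{aff}(Q)$ is cut out by the equations $x_v=\gamma_{v\rho_k}x_{\rho_k}+c_v$ for $v$ in the $k$-th block and $x_v=\lambda_v$ for $v$ outside $S$. Expressing a facet of $Q$ in the coordinates $(x_{\rho_1},\dots,x_{\rho_m})$, the space parallel to it is a positive-part-closed hyperplane in these coordinates, so the facet is cut out by an inequality $\gamma x_{\rho_k}+\gamma' x_{\rho_l}\le b'$ with $\gamma>0>\gamma'$, or by a one-sided bound $x_{\rho_k}\le c$ or $x_{\rho_k}\ge c$. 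After dividing through to normalise one coefficient to $1$, the two-variable inequality is an edge relation $\alpha x_w+c\le x_v$ (read in one of the two directions between $\rho_k$ and $\rho_l$); the block equations contribute, for every non-representative $v$, a pair of opposite edges between $v$ and its representative; and the one-sided bounds and the equations $x_v=\lambda_v$ are realised by adjoining fresh marked nodes to $A$ with the appropriate markings and one or two incident edges. The marked network $\Gamma$ on node set $V$ together with these new marked nodes then satisfies $\Ord(\Gamma)=Q$.

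I expect the main obstacle to be the classification of positive-part-closed subspaces — pinning down how linearity forces the lattice structure to collapse into disjoint proportional blocks — together with the bookkeeping needed to pass from facets of a possibly non-full-dimensional $Q$, whose outer normals are only determined modulo $\operatorname{lin}(Q)^{\perp}$, to honest two-variable relations in a fixed coordinate system. The remaining steps — the localization trick and the conversion of relations into network edges and marked nodes — should be routine once this rigidity is established.
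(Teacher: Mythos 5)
The paper does not prove this statement; it is imported verbatim from Felsner--Knauer \cite{FK11}, whose argument runs exactly along the lines you propose: localize to faces to see they are sublattices, classify sublattice hyperplanes/subspaces as those whose normals have at most one positive and one negative entry (equivalently, your disjoint-support block decomposition of positive-part-closed subspaces), and translate the resulting two-variable facet inequalities and affine-hull equations into weighted edges and marked nodes. Your reconstruction is correct, including the extremal-ray argument (where the key unstated point, that $r-(x\wedge r)=(r-x)^{+}$ lies in $L_{\ge 0}$, does hold) and the reduction of a non-full-dimensional $Q$ to representative coordinates, so it matches the cited proof in substance.
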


\begin{rem}
    In order to make the structural similarity to (marked) order polyhedra
    more explicit, our definition of marked network is slightly different from
    that employed in~\cite{FK11}. Most notably, we use markings and require
    \(\alpha_e>0\) instead of allowing loops and our
    edge weights $(\alpha_e,c_a)$ translate to $(\frac{1}{\alpha_e},
    \frac{-c_e}{\alpha_e})$ in the notation of~\cite{FK11}.
\end{rem}

To a marked network with at least all sinks marked, we associate the
\Def{transfer map} $\phi_\Gamma \colon \R^{V} \to \R^{V}$ defined as
\begin{equation}\label{eqn:phi}
    \phi_\Gamma(x)_v  \ \coloneqq \  x_v - \max_{v\ixrightarrow{e}w} \
    (\alpha_e x_w + c_e) \, .
\end{equation}
Let us point out again, that $x_v = \lambda_v$ for $v \in A$.

If $\Gamma$ is derived from a poset $P$ as in \Cref{ex:poset}, the map
$\phi_\Gamma$ is the original transfer map from~\cite{Stanley}. 
If $\Gamma$ is \Def{acyclic}, that is, the underlying directed graph has 
no directed cycles, then we will
see in \Cref{thm:inverse-transfer} that $\phi_\Gamma$ is bijective. In
the non-acyclic situation, this need not be true. In order to illustrate, let
us give a geometric reformulation of the transfer map.  We denote the
standard basis of $\R^V$ by $\{e_v\}_{v \in V}$. For a polyhedron $Q \subseteq
\R^V$ that does not contain $-e_v$ in its recession cone for all $v\in V$, define
the map
$\phi_Q \colon Q \to \R^V$ by
\begin{equation}\label{eqn:geom_phi}
    \phi_Q(x)_v \ \coloneqq \ \max( \mu \ge 0 : x - \mu e_v \in Q )  
\end{equation}
for all $v \in V$. If $Q$ is defined by linear inequalities of the form
$\ell_i(x) \le b_i$, then 
\[
    \phi_Q(x)_v \ = \  \min \left( \tfrac{b_i - \ell_i(x)}{-\ell_i(e_v)} :
    \text{ for } 
    i \text{ with }
    \ell_i(e_v) < 0 \right)  \, .
\]
From~\eqref{eqn:distr_ineqs}, we conclude that $\phi_{\Ord(\Gamma)}$
coincides with $\phi_\Gamma$.

\begin{example} \label{ex:cyclic-non-injective}
    Let \(\Gamma\) be the marked network depicted in \Cref{subfig:bad-network}.
    \begin{figure}
        \centering
        \subcaptionbox[]{Network \(\Gamma\)\label{subfig:bad-network}}[0.18\textwidth][c]{
        \raisebox{2em}{
        \begin{tikzpicture}[xscale=.6,yscale=1.5]
            \path (0,0) node[posetelmm] (C) {} node[below=2pt,marking] {\(0\)};
            \path (1,1) node[posetelm] (A) {} node[right=2pt,elmname] {\(w\)};
            \path (-1,1) node[posetelm] (B) {} node[left=2pt,elmname] {\(v\)};
            \draw[netedge] (A) to[bend left=10] node[midway, below=-2pt] {\(\scriptstyle 2,-2\)} (B);
            \draw[netedge] (B) to[bend left=10] node[midway, above=-2pt] {\(\scriptstyle 2,-2\)} (A);
            \draw[netedge] (A) to[bend left=30] (C);
            \draw[netedge] (B) to[bend right=30] (C);
        \end{tikzpicture}}} \hfill
        \subcaptionbox[]{the polytope \(\Ord(\Gamma)\)\label{subfig:bad-d}}[0.38\textwidth][c]{
        \begin{tikzpicture}[scale=2]
            \draw (-.1,0) -- (2,0) node [right] {\(x_v\)};
            \draw (0,-.1) -- (0,2) node [above] {\(x_w\)};
            \fill[black!6,draw=black,thick] (0,0) -- (0,1) -- (2,2) -- (1,0) -- cycle;
            \draw[blue,dashed, thick] (1,0) -- (1,1) -- (0,1);
            \draw[blue,very thick] (1.5,1) -- (1,1) -- (1,1.5);
            \draw[blue,dashed, thick] (1.6667,1.3333) -- (1,1) -- (1.3333,1.6667);
            \path (.75,1.2) node[fill,circle,inner sep=1pt] (x) {} node[right=-1pt] {\(\scriptstyle x\)};
            \draw[thick,dotted] (0.4,1.2) -- (x) -- (.75,0);
            \draw (0,1) node[left] {\(1\)};
            \draw (1,0) node[below] {\(1\)};
        \end{tikzpicture}} \hfill
        \subcaptionbox[]{the image \(\phi_\Gamma(\Ord(\Gamma))\)\label{subfig:bad-image}}[0.38\textwidth][c]{
        \begin{tikzpicture}[scale=2]
            \draw (-.1,0) -- (2,0) node [right] {\(x_v\)};
            \draw (0,-.1) -- (0,2) node [above] {\(x_w\)};
            \fill[black!6,draw=black,thick] (0,0) -- (0,1.5) -- (1,1) -- (1.5,0) -- cycle;
            \draw[blue,dashed, thick] (1,0) -- (1,1) -- (0,1);
            \draw[blue,very thick] (1.5,0) -- (1,1) -- (0,1.5);
            \path (.35,1.2) node[fill,circle,inner sep=1pt] (x) {} node[below
            right=-1.5pt] (xx) {\(\scriptstyle \phi_\Gamma(x)\)};
            \draw[thick,dotted] (0.0,1.2) -- (x) -- (.35,0);
            \draw (0,1) node[left] {\(1\)};
            \draw (1,0) node[below] {\(1\)};
        \end{tikzpicture}}
        \caption[Marked network and associated polytopes from \Cref{ex:cyclic-non-injective}]{The marked network \(\Gamma\) of \Cref{ex:cyclic-non-injective} with the associated distributive polytope and its ``folded'' image under the non-injective transfer map.}
        \label{fig:bad-example}
    \end{figure}
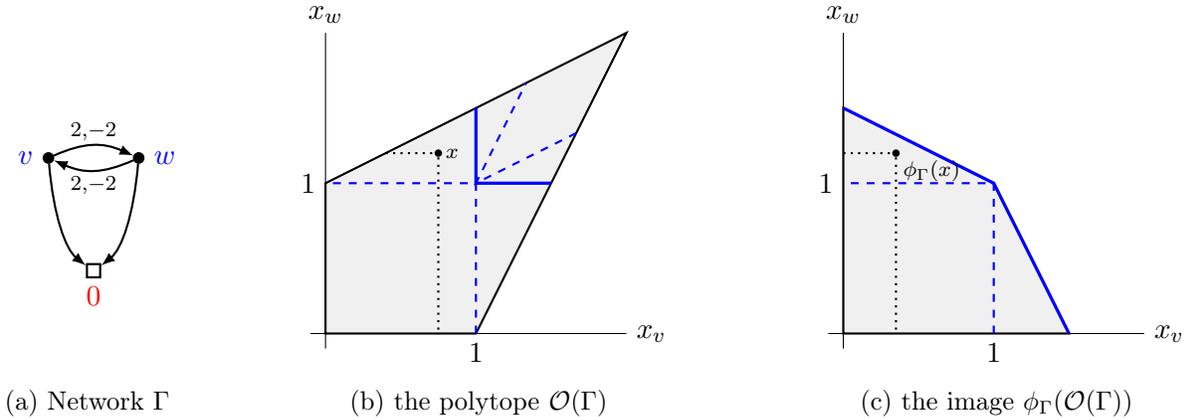
    The distributive polyhedron \(\Ord(\Gamma)\) is a ``kite'' given by the
    inequalities \(0\le x_v\), \(0\le x_w\), \(2x_v-2\le x_w\) and \(2x_w-2\le
    x_v\) as shown in \Cref{subfig:bad-d}.  The transfer map for this network
    is given by
    \begin{equation*}
        \phi_\Gamma \begin{pmatrix} x_v \\ x_w\end{pmatrix} \ = \
        \begin{pmatrix} x_v - \max\{0,2x_w-2\} \\ x_w - \max\{0,2x_v-2\} \end{pmatrix}.
    \end{equation*}
    The transfer map is not injective on \(\Ord(\Gamma)\). For example the
    vertices \((0,0)\) and \((2,2)\) both get mapped to the origin.  In fact,
    the map is $2$-to-$1$ and ``folds'' the polytope along the thick blue line
    in \Cref{subfig:bad-d}. The dashed lines in the lower left part stay fixed
    under the transfer map and have the same image as the dashed lines in the
    upper right part. The geometric behavior of the transfer map given above
    is shown for some \(x\in\Ord(\Gamma)\) using dotted lines.
\end{example}

\begin{example} \label{ex:cyclic-injective}
    Let \(\Gamma\) be the marked network depicted in \Cref{subfig:good-network}.
    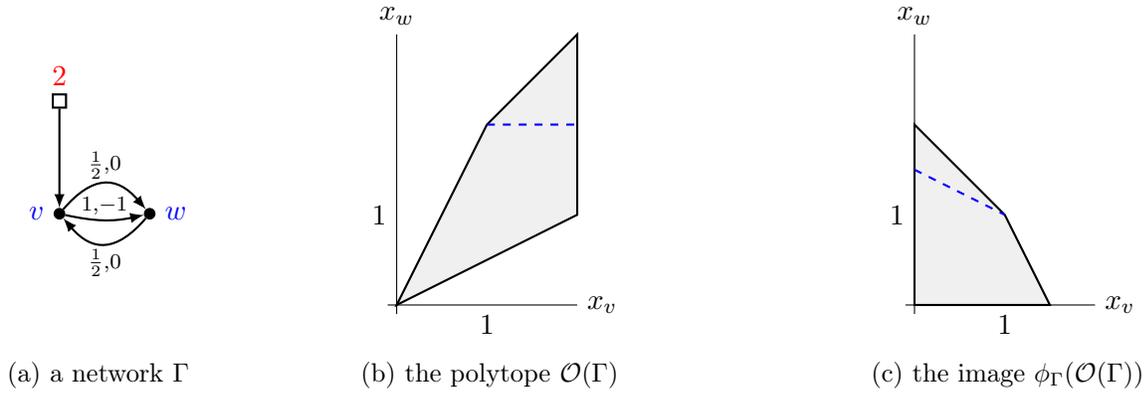
\begin{figure}
        \centering
        \subcaptionbox[]{a network \(\Gamma\)\label{subfig:good-network}}[0.18\textwidth][c]{
        \raisebox{2em}{
        \begin{tikzpicture}[xscale=.6,yscale=1.5]
            \path (-1,1) node[posetelmm] (C) {} node[above=2pt,marking] {\(2\)};
            \path (-1,0) node[posetelm] (v) {} node[left=2pt,elmname] {\(v\)};
            \path (1,0) node[posetelm] (w) {} node[right=2pt,elmname] {\(w\)};
            \draw[netedge] (v) to[bend left=25] node[midway, above=-2pt] {\(\scriptstyle \frac 1 2, 0\)} (w);
            \draw[netedge] (v) to[bend right=5] node[midway, above=-2pt] {\(\scriptstyle 1, -1\)} (w);
            \draw[netedge] (w) to[bend left=25] node[midway, below=-2pt] {\(\scriptstyle \frac 1 2, 0\)} (v);
            \draw[netedge] (C) to (v);
        \end{tikzpicture}}} \hfill
        \subcaptionbox[]{the polytope \(\Ord(\Gamma)\)\label{subfig:good-d}}[0.38\textwidth][c]{
        \begin{tikzpicture}[scale=1.2]
            \draw (-.1,0) -- (2,0) node [right] {\(x_v\)};
            \draw (0,-.1) -- (0,3) node [above] {\(x_w\)};
            \fill[black!6,draw=black,thick] (0,0) -- (2,1) -- (2,3) -- (1,2) -- cycle;
            \draw (0,1) node[left] {\(1\)};
            \draw (1,0) node[below] {\(1\)};
            \draw[blue, dashed, thick] (1,2) -- (2,2);
        \end{tikzpicture}} \hfill
        \subcaptionbox[]{the image \(\phi_\Gamma(\Ord(\Gamma))\)\label{subfig:good-image}}[0.38\textwidth][c]{
        \begin{tikzpicture}[scale=1.2]
            \draw (-.1,0) -- (2,0) node [right] {\(x_v\)};
            \draw (0,-.1) -- (0,3) node [above] {\(x_w\)};
            \fill[black!6,draw=black,thick] (0,0) -- (1.5,0) -- (1,1) -- (0,2) -- cycle;
            \draw (0,1) node[left] {\(1\)};
            \draw (1,0) node[below] {\(1\)};
            \draw[blue, dashed, thick] (0,1.5) -- (1,1);
        \end{tikzpicture}}
        \caption[Marked network and associated polytopes from \Cref{ex:cyclic-injective}]{The marked network \(\Gamma\) of \Cref{ex:cyclic-injective} with the associated distributive polytope and its bijective image under the transfer map.}
        \label{fig:good-example}
    \end{figure}
    The distributive polyhedron \(\Ord(\Gamma)\) is a quadrilateral given by
    the inequalities \(\tfrac 1 2 x_v \le x_w\), \(\tfrac 1 2 x_w \le x_v\),
    \(x_w - 1 \le x_v\) and \(x_v\le 2\) as shown in \Cref{subfig:good-d}.
    The transfer map for this network is given  by
    \begin{equation*}
        \phi_\Gamma \begin{pmatrix} x_v \\ x_w\end{pmatrix} =
        \begin{pmatrix} x_v - \max\{\frac 1 2 x_w, x_w-1\} \\ x_w - \frac 1 2 x_v \end{pmatrix}.
    \end{equation*}
    In this example, the transfer map is bijective and maps \(\Ord(\Gamma)\)
    to the anti-blocking polytope depicted in \Cref{subfig:good-image}.  The
    dashed line divides \(\Ord(\Gamma)\) into the two linearity regions of the
    transfer map.  We will come back to this example in \Cref{sec:ab-image}
    after constructing inverse transfer maps and describing the inequalities
    for \(\phi_\Gamma(\Ord(\Gamma))\).
\end{example}

As we have seen in \Cref{ex:cyclic-non-injective,ex:cyclic-injective}, some
cyclic networks lead to bijective transfer maps while others do not.  The
important difference in the two examples is the product of weights along the
cycles. This motivates the following definition.

\begin{definition} \label{def:lossygainy}
    Let \(\Gamma=(V\uplus A,E,\alpha,c,\lambda)\) be a marked network.  A
    \Def{walk} $W$ in \(\Gamma\) is a sequence 
    \begin{equation*}
        W \ = \ v_1 \xrightarrow{e_1} v_2 \xrightarrow{e_2} \cdots
        \xrightarrow{e_r} v_{r+1} \cdots.
    \end{equation*}
    If $W$ is finite, then its \Def{length} $|W|$ is the number of edges. The
    \Def{weight} of $W$ is 
    \begin{equation*}
        \alpha(W) \ \coloneqq \ \prod_{i=1}^{r} \alpha_{e_i}.
    \end{equation*}
    If all nodes are distinct, then $W$ is called a \Def{path}. If all nodes
    are distinct except for $v_{r+1}=v_1$, then we call $W$ a \Def{cycle}.
    In accordance with~\cite{FK11}, a cycle $C$ is called \Def{gainy} if
    \(\alpha(C) < 1\), \Def{lossy} if \(\alpha(C) > 1\) and \Def{breakeven} if
    \(\alpha(C) = 1\).  Finally, we call a marked network
    \Def{gainy}/\Def{lossy}/\Def{breakeven} if all cycles are
    gainy/lossy/breakeven.
\end{definition}

In the following section, we will show that the observation made in
\Cref{ex:cyclic-non-injective,ex:cyclic-injective} is true in general: when
\(\Gamma\) contains only gainy cycles, the transfer map is bijective.

\section{Gainy networks and infinite walks}\label{sec:gainy}

Throughout this section we assume that \(\Gamma=(V\uplus
A,E,\alpha,c,\lambda)\) is a gainy marked network such that every sink is
marked. Our goal is to construct an inverse to the transfer map
\(\phi_\Gamma\) and show that the image \(\phi_\Gamma(\Ord(\Gamma))\) is an
anti-blocking polyhedron by giving explicit inequalities determined by walks
in \(\Gamma\).

\begin{definition}
    To \(\Gamma\) associate the set \(\WW\) consisting of finite walks
    \begin{equation}
        v_1 \xrightarrow{e_1} v_2 \xrightarrow{e_2} \cdots \xrightarrow{e_r}
        v_{r+1} \quad\text{with \(v_i\in V\) for \(i\le r\) and \(v_{r+1}\in
        A\),}
        \label{eq:fin-walk}
    \end{equation}
    as well as infinite walks
    \begin{equation}
        v_1 \xrightarrow{e_1} v_2 \xrightarrow{e_2} v_3 \xrightarrow{e_3} \cdots
        \quad\text{with all \(v_i\in V\).}
        \label{eq:inf-walk}
    \end{equation}
    In particular, $A \subseteq \WW$, since walks of length $0$ are allowed.

    Given a walk \(W \in \WW\) starting in \(w\) and an edge
    \(v\lxrightarrow{e}w\) from an unmarked node \(v\in V\), denote by
    \(v\lxrightarrow{e}W\) the walk in \(\WW\) obtained by prepending the edge
    \(e\).
\end{definition}

In order to define the inverse transfer map, we want to associate to each walk
\(W\in\WW\) an affine-linear form \(\Sigma(W)\) on \(\R^V\) satisfying
\(\Sigma(a)(x) \coloneqq \lambda_a\) for all trivial walks at a marked element
\(a\in A\) and for all walks $W = v \to W'$ of positive length, the recursion
\begin{equation} \label{eq:Sigma-recursion}
    \Sigma(W)(x) \ = \ \alpha_e \Sigma(W')(x) + \left( x_v + c_e
    \right) \, .
\end{equation}

In order to see that $\Sigma$ is well-defined on infinite walks, we need the
following statement on convergence of infinite series.

\begin{prop} \label{prop:convergence}
    Let \(W \in \WW\) be an infinite walk as in \eqref{eq:inf-walk}.  The
    infinite series
    \begin{equation*}
        \sum_{k=1}^\infty \left( \prod_{j=1}^{k-1} \alpha_{e_j} \right) \left( x_{v_k} + c_{e_k} \right)
    \end{equation*}
    absolutely converges for all \(x\in\R^V\).
\end{prop}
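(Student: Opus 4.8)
The plan is to dominate the series by a convergent geometric series, which requires two ingredients: a uniform bound on the factors $x_{v_k}+c_{e_k}$, and geometric decay of the partial products $\prod_{j=1}^{k-1}\alpha_{e_j}$. The first is immediate: since every $v_k$ lies in the finite set $V$ and every $e_k$ in the finite edge set $E$, the constant $B \coloneqq \max_{v\in V}|x_v| + \max_{e\in E}|c_e|$ is finite and $|x_{v_k}+c_{e_k}|\le B$ holds for all $k$.

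For the decay, write $W_k$ for the initial segment $v_1\xrightarrow{e_1}\cdots\xrightarrow{e_{k-1}}v_k$, so that $\prod_{j=1}^{k-1}\alpha_{e_j}=\alpha(W_k)$. I would use the standard fact that the edge multiset of any walk decomposes as the edge set of a simple path $P$ from $v_1$ to $v_k$ together with the edge multisets of finitely many simple cycles $C_1,\dots,C_{m_k}$, obtained by iteratively excising closed sub-walks at repeated nodes. As $\alpha$ is multiplicative over edges counted with multiplicity, $\alpha(W_k)=\alpha(P)\prod_{i=1}^{m_k}\alpha(C_i)$. Since $\Gamma$ is finite there are only finitely many simple paths and simple cycles; let $N$ be the maximum of $\alpha(P)$ over simple paths $P$ (a finite number) and $\theta$ the maximum of $\alpha(C)$ over simple cycles $C$, which is $<1$ because $\Gamma$ is gainy (if $\Gamma$ has no cycle at all it is acyclic, admits no infinite walk, and the statement is vacuous). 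Each simple cycle has at most $|V|$ edges and $P$ has at most $|V|-1$ edges, so counting the $k-1$ edges of $W_k$ yields $|V|\,m_k \ge (k-1)-|P| \ge k-|V|$, that is $m_k\ge (k-|V|)/|V|$ for $k\ge|V|$. Consequently $\alpha(W_k)\le N\,\theta^{m_k}\le N\,\theta^{(k-|V|)/|V|}$ for all $k\ge|V|$.

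Combining the two bounds, the $k$-th term of the series has absolute value at most $NB\,\theta^{(k-|V|)/|V|}$ once $k\ge|V|$, and $\sum_{k}\theta^{(k-|V|)/|V|}$ converges since $0<\theta^{1/|V|}<1$; absolute convergence for every $x\in\R^V$ then follows by comparison. The only step with any real content is the cycle decomposition together with the linear lower bound on the number $m_k$ of excised cycles: this is precisely what upgrades the per-cycle inequality $\alpha(C)<1$ to a genuine exponential rate in $k$. Everything else is bookkeeping with finitely many constants depending only on $\Gamma$ (and, for $B$, on $x$).
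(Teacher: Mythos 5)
Your proof is correct and follows essentially the same route as the paper: bound $|x_{v_k}+c_{e_k}|$ uniformly, decompose the truncated walk into a simple path plus elementary cycles, and use gainyness plus finiteness of the network to force geometric decay of $\prod_{j<k}\alpha_{e_j}$. The only difference is packaging --- you count the number of excised cycles ($m_k\ge(k-|V|)/|V|$) and compare directly with a geometric series, whereas the paper normalizes weights per edge via $a=\max_C\alpha(C)^{1/|C|}$ and applies the root test.
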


\begin{proof}
    Since \(\Gamma\) has only finitely many nodes and edges, we have
    \(\left|x_{v_k}+c_{e_k}\right|\le M\) for some $M$.  It is therefore
    enough to show absolute convergence of \(\sum_{k=1}^\infty
    \prod_{j=1}^{k-1} \alpha_{e_j}\).  Using the root test, it is sufficient
    to show that
    \begin{equation*}
        \limsup_{k\to\infty} \left( \prod_{j=1}^k \alpha_{e_j} \right)^{\frac
        1 k} < 1.
    \end{equation*}

    Since \(\Gamma\) is finite, there are only finitely many paths and cycles
    and we can define
    \[
        a \coloneqq \max \left\{ \, \alpha(C)^{{\frac{1}{|C|}}} : C \text{ cycle}
        \,\right\} \quad\text{and} \quad 
        b \coloneqq \max \left\{ \, \alpha(P)^{{\frac{1}{|P|}}} : P \text{ path}
        \,\right\} \, .
    \]

    Now fix some \(k\in\N\) and consider the truncated walk 
    \begin{equation*}
        W^{(k)} \ = \ v_1 \xrightarrow{e_1} v_2 \xrightarrow{e_2} \cdots \xrightarrow{e_k} v_{k+1}
    \end{equation*}
    We may decompose \(W^{(k)}\) into a path from \(v_1\) to \(v_{k+1}\) and
    finitely many elementary cycles as depicted in \Cref{fig:walk-decomp}.
    \begin{figure}
        \centering
        \begin{tikzpicture}
                \node [posetelm] (0) at (0, 0) {};
                \node [posetelm] (1) at (1, 0) {};
                \node [posetelm] (2) at (2, 0) {};
                \node [posetelm] (3) at (3, 0) {};
                \node [posetelm] (4) at (4, -0.75) {};
                \node [posetelm] (5) at (4.5, -1.75) {};
                \node [posetelm] (6) at (3.5, -2.75) {};
                \node [posetelm] (7) at (5, -2.75) {};
                \node [posetelm] (8) at (6, -2.25) {};
                \node [posetelm] (9) at (5.5, -1.5) {};
                \node [posetelm] (10) at (2.25, -2.75) {};
                \node [posetelm] (11) at (1.5, -1.75) {};
                \node [posetelm] (12) at (1.75, -0.75) {};
                \node [posetelm] (13) at (4, 0) {};
                \node [posetelm] (14) at (5, 0) {};
                \node [posetelm] (15) at (6, 0) {};
                \draw[netedge]                    (0)  to node[midway, above] {\(\scriptstyle e_1\)} (1);
                \draw[netedge]                    (1)  to node[midway, above] {\(\scriptstyle e_2\)} (2);
                \draw[netedge]                    (2)  to node[midway, above] {\(\scriptstyle e_3\)} (3);
                \draw[netedge] [in=105, out=0]    (3)  to node[midway, below,xshift=-2pt,yshift=2pt] {\(\scriptstyle e_4\)} (4);
                \draw[netedge] [in=117, out=-75]  (4)  to node[midway, right,xshift=-2pt,yshift=2pt] {\(\scriptstyle e_5\)} (5);
                \draw[netedge] [in=150, out=-63]  (5)  to node[midway, right,xshift=-2pt,yshift=2pt] {\(\scriptstyle e_6\)} (7);
                \draw[netedge] [in=-90, out=-30]  (7)  to node[midway, above] {\(\scriptstyle e_7\)} (8);
                \draw[netedge] [in=-15, out=90]   (8)  to node[midway, right] {\(\scriptstyle e_8\)} (9);
                \draw[netedge] [in=30, out=165]   (9)  to node[midway, above] {\(\scriptstyle e_9\)} (5);
                \draw[netedge] [in=30, out=-150]  (5)  to node[midway, left,xshift=2pt,yshift=2pt] {\(\scriptstyle e_{10}\)} (6);
                \draw[netedge] [in=-15, out=-150] (6)  to node[midway, above] {\(\scriptstyle e_{11}\)} (10);
                \draw[netedge] [in=-90, out=165]  (10) to node[midway, right,xshift=-2pt,yshift=2pt] {\(\scriptstyle e_{12}\)} (11);
                \draw[netedge] [in=-120, out=90]  (11) to node[midway, right,xshift=-2pt,yshift=-2pt] {\(\scriptstyle e_{13}\)} (12);
                \draw[netedge] [in=180, out=60]   (12) to node[midway, below,xshift=2pt] {\(\scriptstyle e_{14}\)} (3);
                \draw[netedge]                    (3)  to node[midway, above] {\(\scriptstyle e_{15}\)} (13);
                \draw[netedge]                    (13) to node[midway, above] {\(\scriptstyle e_{16}\)} (14);
                \draw[netedge]                    (14) to node[midway, above] {\(\scriptstyle e_{17}\)} (15);
                
                \draw[thick,-triangle 45,decoration={snake,amplitude=1.7pt},decorate] (7,-1.5) -- (8,-1.5);

                \begin{scope}[shift={(8.5,0)}]
                    \node [posetelm] (0) at (0, 0) {};
                    \node [posetelm] (1) at (1, 0) {};
                    \node [posetelm] (2) at (2, 0) {};
                    \node [posetelm] (3) at (3, 0) {};
                    \node [posetelm] (13) at (4, 0) {};
                    \node [posetelm] (14) at (5, 0) {};
                    \node [posetelm] (15) at (6, 0) {};
                    \draw[netedge]                    (0)  to node[midway, above] {\(\scriptstyle e_1\)} (1);
                    \draw[netedge]                    (1)  to node[midway, above] {\(\scriptstyle e_2\)} (2);
                    \draw[netedge]                    (2)  to node[midway, above] {\(\scriptstyle e_3\)} (3);
                    \draw[netedge]                    (3)  to node[midway, above] {\(\scriptstyle e_{15}\)} (13);
                    \draw[netedge]                    (13) to node[midway, above] {\(\scriptstyle e_{16}\)} (14);
                    \draw[netedge]                    (14) to node[midway, above] {\(\scriptstyle e_{17}\)} (15);
                \end{scope}
                \begin{scope}[shift={(8.5,-.5)}]
                    \node [posetelm] (3) at (3, 0) {};
                    \node [posetelm] (4) at (4, -0.75) {};
                    \node [posetelm] (5) at (4.5, -1.75) {};
                    \node [posetelm] (6) at (3.5, -2.75) {};
                    \node [posetelm] (10) at (2.25, -2.75) {};
                    \node [posetelm] (11) at (1.5, -1.75) {};
                    \node [posetelm] (12) at (1.75, -0.75) {};
                    \draw[netedge] [in=105, out=0]    (3)  to node[midway, below,xshift=-2pt,yshift=2pt] {\(\scriptstyle e_4\)} (4);
                    \draw[netedge] [in=117, out=-75]  (4)  to node[midway, right,xshift=-2pt,yshift=2pt] {\(\scriptstyle e_5\)} (5);
                    \draw[netedge] [in=30, out=-150]  (5)  to node[midway, left,xshift=2pt,yshift=2pt] {\(\scriptstyle e_{10}\)} (6);
                    \draw[netedge] [in=-15, out=-150] (6)  to node[midway, above] {\(\scriptstyle e_{11}\)} (10);
                    \draw[netedge] [in=-90, out=165]  (10) to node[midway, right,xshift=-2pt,yshift=2pt] {\(\scriptstyle e_{12}\)} (11);
                    \draw[netedge] [in=-120, out=90]  (11) to node[midway, right,xshift=-2pt,yshift=-2pt] {\(\scriptstyle e_{13}\)} (12);
                    \draw[netedge] [in=180, out=60]   (12) to node[midway, below,xshift=2pt] {\(\scriptstyle e_{14}\)} (3);
                \end{scope}
                \begin{scope}[shift={(9,-.5)}]
                    \node [posetelm] (5) at (4.5, -1.75) {};
                    \node [posetelm] (7) at (5, -2.75) {};
                    \node [posetelm] (8) at (6, -2.25) {};
                    \node [posetelm] (9) at (5.5, -1.5) {};
                    \draw[netedge] [in=150, out=-63]  (5)  to node[midway, right,xshift=-2pt,yshift=2pt] {\(\scriptstyle e_6\)} (7);
                    \draw[netedge] [in=-90, out=-30]  (7)  to node[midway, above] {\(\scriptstyle e_7\)} (8);
                    \draw[netedge] [in=-15, out=90]   (8)  to node[midway, right] {\(\scriptstyle e_8\)} (9);
                    \draw[netedge] [in=30, out=165]   (9)  to node[midway, above] {\(\scriptstyle e_9\)} (5);
                \end{scope}
        \end{tikzpicture}        
        \caption{The decomposition of a finite walk into a path and cycles as
        used in the proof of \Cref{prop:convergence}.}
        \label{fig:walk-decomp}
    \end{figure}
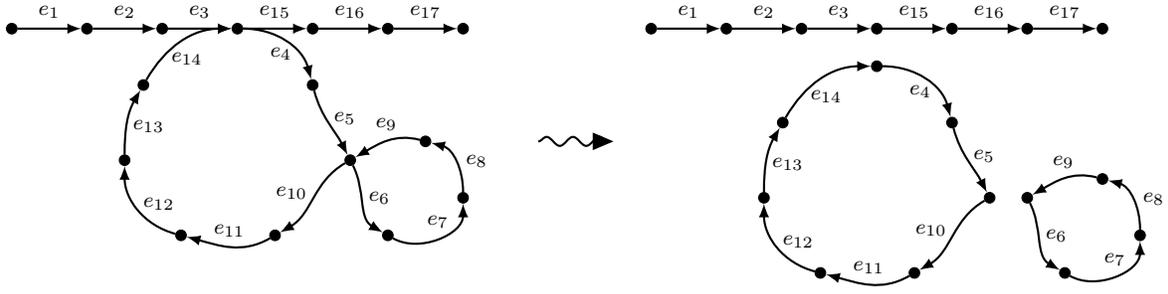
    If the path has \(t\le k\) edges, the total number of edges
    in the cycles is \(k-t\) and we obtain
    \begin{equation*}
        \alpha_{W^{(k)}} = \prod_{j=1}^k \alpha_{e_j} \le a^{k-t} b^t = \left( \frac b a \right)^t a^k. 
    \end{equation*}
    Let \(\ell \ge 0\) be the maximal length of a path in \(\Gamma\) and set
    \(c=\max(b/a, 1 )\) to obtain    
    \begin{equation*}
        \left( \prod_{j=1}^k \alpha_{e_j} \right)^{\frac 1 k} \le \left( c^\ell a^k \right)^{\frac 1 k} = c^{\frac \ell k} a
        \enskip\xrightarrow{k\to\infty}\enskip a.
    \end{equation*}
    Since all cycles in \(\Gamma\) are gainy by assumption, we have \(a<1\),
    finishing the proof.
\end{proof}

Using \Cref{prop:convergence}, we can define the desired linear forms.

\begin{definition} \label{def:sigma}
    For \(W \in \WW\) define an affine-linear form
    \(\Sigma(W)\colon\R^V\to\R\) as follows.  If \(W\) is a finite walk as in
    \eqref{eq:fin-walk}, set
    \begin{equation*}
        \Sigma(W)(x) \ \coloneqq \
        \sum_{k=1}^r \left( \prod_{j=1}^{k-1} \alpha_{e_j} \right) \left( x_{v_k} + c_{e_k} \right) +
        \left( \prod_{j=1}^r \alpha_{e_j} \right) x_{v_{r+1}}.
    \end{equation*}
    If \(W\) is an infinite walk as in \eqref{eq:inf-walk}, set
    \begin{equation*}
        \Sigma(W)(x) \ \coloneqq \
        \sum_{k=1}^\infty \left( \prod_{j=1}^{k-1} \alpha_{e_j} \right) \left( x_{v_k} + c_{e_k} \right).
    \end{equation*}
\end{definition}

By construction, the defined linear forms satisfy the recursion
\eqref{eq:Sigma-recursion}.  Indeed \eqref{eq:Sigma-recursion} together with
$\Sigma(a)(x)\coloneqq \lambda_a$ uniquely determines the linear forms
\(\Sigma(W)\) given the convergence in \Cref{prop:convergence}.

\begin{prop}
    For any \(x\in\R^V\) we have \(\sup_{W\in\WW} \Sigma(W)(x) < \infty\).
\end{prop}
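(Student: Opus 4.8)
The plan is to extract from the proof of \Cref{prop:convergence} a bound on $|\Sigma(W)(x)|$ that is \emph{uniform} over all $W \in \WW$. Recall that in that proof one decomposes an initial segment of a walk into a path of length $t \le \ell$ (where $\ell$ is the maximal length of a path in $\Gamma$) together with finitely many cycles, and concludes $\prod_{j=1}^{k}\alpha_{e_j} \le c^\ell a^k$, where $a \coloneqq \max\{\alpha(C)^{1/|C|} : C \text{ a cycle}\} < 1$ and $c \coloneqq \max(b/a,1) \ge 1$ depend only on $\Gamma$. The first observation is that this decomposition, and hence the estimate $\prod_{j=1}^{m}\alpha_{e_j} \le c^\ell a^m$, applies verbatim to \emph{every} initial segment of \emph{every} walk in $\WW$, finite or infinite, since the constants $a,b,c,\ell$ are features of the network alone.

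Next I would introduce the finite constants $M \coloneqq \max\{\,|x_v + c_e| : v\in V,\ e \in E\,\}$ and $\Lambda \coloneqq \max\{\,|\lambda_a| : a \in A\,\}$, and bound $\Sigma(W)(x)$ summand by summand. For an infinite walk $W$ as in \eqref{eq:inf-walk} this gives
\begin{equation*}
    |\Sigma(W)(x)| \ \le \ M \sum_{k=1}^{\infty} \prod_{j=1}^{k-1}\alpha_{e_j} \ \le \ M \sum_{m=0}^{\infty} c^\ell a^m \ = \ \frac{M\,c^\ell}{1-a}\,.
\end{equation*}
For a finite walk $W$ of length $r$ as in \eqref{eq:fin-walk} the same estimate controls the first $r$ summands, while the tail term satisfies $\bigl(\prod_{j=1}^{r}\alpha_{e_j}\bigr)|\lambda_{v_{r+1}}| \le \Lambda c^\ell a^r \le \Lambda c^\ell$. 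Hence in all cases $|\Sigma(W)(x)| \le (M+\Lambda)\,c^\ell/(1-a)$, a bound independent of $W$, and therefore $\sup_{W\in\WW}\Sigma(W)(x) < \infty$.

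The only point that needs care is the uniformity claim in the first paragraph: one must check that the path-plus-cycles decomposition used in \Cref{prop:convergence} does not secretly depend on the fixed walk there, so that the geometric bound is a statement about $\Gamma$ and not about a single walk. Once that is in hand the argument is pure bookkeeping, and no further use of the gainy hypothesis is required beyond the inequality $a < 1$ that makes the geometric series sum to a finite constant.
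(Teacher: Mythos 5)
Your proof is correct and follows essentially the same route as the paper: reuse the constants $M,a,b,c,\ell$ and the path-plus-cycles estimate $\prod_{j=1}^{k}\alpha_{e_j}\le c^\ell a^{k}$ from the proof of \Cref{prop:convergence}, which indeed depends only on $\Gamma$ and not on the particular walk, and then bound every $\Sigma(W)(x)$ by the same convergent geometric series $Mc^\ell/(1-a)$. The only cosmetic difference is that you introduce a separate constant $\Lambda$ for the marking values, whereas the paper absorbs the final term $\bigl(\prod_{j=1}^{r}\alpha_{e_j}\bigr)x_{v_{r+1}}$ into the single constant $M$.
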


\begin{proof}
    Let the constants \(M,a,b,c, \ell\) be given as in the proof of \Cref{prop:convergence}.
    For finite walks \(W\in\WW\) as in \eqref{eq:fin-walk}, we have
    \begin{equation*}
        \Sigma(W)(x) \ = \
        \sum_{k=1}^r \left( \prod_{j=1}^{k-1} \alpha_{e_j} \right) \left( x_{v_k} + c_{e_k} \right) +
        \left( \prod_{j=1}^r \alpha_{e_j} \right) x_{v_{r+1}}
        \ \le \ M \sum_{k=1}^{r+1} c^\ell a^{k-1}
        \ \le \ \frac{Mc^\ell}{1-a}.
    \end{equation*}
    Likewise, for infinite walks as in \eqref{eq:inf-walk}, we have
    \begin{equation*}
        \Sigma(W)(x) \ = \ \sum_{k=1}^\infty \left( \prod_{j=1}^{k-1}
        \alpha_{e_j} \right) \left( x_{v_k} + c_{e_k} \right) \ \le \ M
        \sum_{k=1}^\infty c^\ell a^{k-1} \ = \ \frac{Mc^\ell}{1-a}. \qedhere
    \end{equation*}
\end{proof}

\subsection{The inverse transfer map}\label{sec:inverse}

We are now ready to construct an inverse to the transfer map \(\phi_\Gamma\).
For \(v\in V\) denote by \(\WW_v\) the set of all walks \(\gamma\in\WW\) starting in \(v\).

\begin{thm} \label{thm:inverse-transfer}
    Let \(\Gamma=(V\uplus A,E,\alpha,c,\lambda)\) be a gainy marked network
    with all sinks marked. The transfer map \(\phi_\Gamma\colon\R^V\to\R^V\)
    is a piecewise-linear bijection with inverse
    \(\psi_\Gamma\colon\R^V\to\R^V\) given by
    \begin{equation*}
        \psi_\Gamma(y)_v \ \coloneqq \ \sup_{W\in\WW_v} \Sigma(W)(y).
    \end{equation*}
\end{thm}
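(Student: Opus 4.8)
The plan is to show that $\psi_\Gamma$ and $\phi_\Gamma$ are mutually inverse, using the recursion \eqref{eq:Sigma-recursion} and the definition \eqref{eqn:phi} as the two key algebraic facts. First I would record a structural lemma describing $\WW_v$: every nontrivial walk starting in $v$ has the form $v\lxrightarrow{e}W$ for some edge $v\lxrightarrow{e}w$ and some $W\in\WW_w$, and conversely every such prepending produces a walk in $\WW_v$; moreover the trivial walk at $v$ occurs only when $v\in A$, which is excluded since we evaluate at unmarked nodes. Hence, using \eqref{eq:Sigma-recursion} and the fact (from the preceding proposition) that the supremum is finite, one gets for $v\in V$ the fixed-point-type identity
\begin{equation*}
    \psi_\Gamma(y)_v \ = \ \max_{v\ixrightarrow{e}w}\ \bigl(\alpha_e\,\psi_\Gamma(y)_w + c_e\bigr) + y_v \, ,
\end{equation*}
where for a marked sink $w\in A$ we read $\psi_\Gamma(y)_w = \lambda_w$; the maximum over the finitely many outgoing edges is attained. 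Rearranging this is exactly the statement that $\phi_\Gamma(\psi_\Gamma(y)) = y$, so $\phi_\Gamma$ is surjective.

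Next I would prove $\psi_\Gamma(\phi_\Gamma(x)) = x$ for all $x\in\R^V$, which also gives injectivity of $\phi_\Gamma$ and completes the proof that the two maps are inverse bijections. Write $y \coloneqq \phi_\Gamma(x)$, so by \eqref{eqn:phi} we have $y_v = x_v - \max_{v\to w}(\alpha_e x_w + c_e)$, i.e. $x_v \ge \alpha_e x_w + c_e + y_v$ for every outgoing edge, with equality for at least one. Unfolding along any walk $W = v_1\to v_2\to\cdots$ starting at $v = v_1$ and telescoping, one shows by induction on the truncation length that the partial sums of $\Sigma(W)(y)$ are bounded above by $x_v$ (using $\alpha_e > 0$ to preserve inequalities under multiplication, and the convergence from \Cref{prop:convergence} to pass to the limit for infinite walks); hence $\psi_\Gamma(\phi_\Gamma(x))_v \le x_v$. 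For the reverse inequality one constructs a \emph{tight} walk: starting at $v$, repeatedly follow an outgoing edge achieving the maximum in \eqref{eqn:phi}. Because $\Gamma$ is gainy, this greedy walk either reaches a marked sink in finitely many steps or continues forever; in either case, along it every inequality above is an equality, so $\Sigma(W)(y) = x_v$ exactly, giving $\psi_\Gamma(\phi_\Gamma(x))_v \ge x_v$.

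Finally, I would note that $\psi_\Gamma$ is piecewise linear: each $\Sigma(W)$ is an affine-linear form, and although $\WW_v$ is infinite, the supremum is locally a finite maximum — the tight-walk argument shows that in a neighbourhood of any point the relevant walks can be taken among a bounded family (e.g. simple paths to marked sinks together with a controlled number of cycle insertions), so $\psi_\Gamma$ agrees locally with the maximum of finitely many affine forms. Together with $\phi_\Gamma$ being visibly piecewise linear, this yields the PL-homeomorphism claim. The main obstacle is the reverse inequality in the second paragraph: making the greedy tight-walk construction rigorous requires the gainy hypothesis to rule out the degenerate behaviour seen in \Cref{ex:cyclic-non-injective} (where a breakeven cycle lets distinct points collapse), and one must argue that following maximizing edges cannot get trapped in a way that prevents the partial sums from converging to $x_v$ — this is exactly where $\alpha(C) < 1$ for all cycles $C$ does the work, via the convergence estimate already established.
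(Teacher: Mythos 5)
Your proposal is correct and follows essentially the same route as the paper: the recursion \eqref{eq:Sigma-recursion} gives $\phi_\Gamma\circ\psi_\Gamma=\mathrm{id}$, a telescoping estimate gives $\psi_\Gamma(\phi_\Gamma(x))_v\le x_v$ (the paper isolates this as \Cref{lem:psi-bound}), and a greedy walk along maximizing edges gives the reverse inequality. The only cosmetic difference is that the paper stops the greedy walk once a node repeats and explicitly forms a monocycle (which it then reuses to show the supremum is a finite maximum over $\MW_v$, hence piecewise linear), whereas you let the walk run forever and argue local finiteness separately; both are fine.
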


Since part of the proof of \Cref{thm:inverse-transfer} will be relevant when we give a description of \(\phi_\Gamma(\Ord(\Gamma))\) below, we provide the following lemma first.

\begin{lem} \label{lem:psi-bound}
    For any \(x\in\R^V\) and \(v\in V\uplus A\) we have
    \begin{equation*}
        \sup_{W\in\WW_{v}} \Sigma(W)(\phi_\Gamma(x)) \le x_v.
    \end{equation*}
\end{lem}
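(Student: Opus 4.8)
The plan is to induct on the structure of walks, using the recursion \eqref{eq:Sigma-recursion} together with the defining formula \eqref{eqn:phi} for the transfer map. First observe that for a trivial walk $W = a$ at a marked node $a \in A$ we have $\Sigma(a)(\phi_\Gamma(x)) = \lambda_a = x_a$ by definition, so the claim holds with equality. For a walk $W = v \xrightarrow{e} W'$ of positive length starting at an unmarked $v \in V$, with $W'$ starting at the head $w$ of $e$, the recursion gives
\begin{equation*}
    \Sigma(W)(\phi_\Gamma(x)) \ = \ \alpha_e\, \Sigma(W')(\phi_\Gamma(x)) + \bigl( \phi_\Gamma(x)_v + c_e \bigr).
\end{equation*}
Here one must be slightly careful about the meaning of $\phi_\Gamma(x)_w$ when $w \in A$: in that case $\phi_\Gamma(x)_w = \lambda_w = x_w$ and $W'$ is the trivial walk at $w$, so the base case applies; otherwise $w \in V$ and $W' \in \WW_w$, so the inductive hypothesis $\Sigma(W')(\phi_\Gamma(x)) \le x_w$ is available.

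The key step is then the estimate
\begin{equation*}
    \alpha_e\, \Sigma(W')(\phi_\Gamma(x)) + \phi_\Gamma(x)_v + c_e
    \ \le \ \alpha_e x_w + \phi_\Gamma(x)_v + c_e
    \ \le \ \alpha_e x_w + x_v - (\alpha_e x_w + c_e) + c_e
    \ = \ x_v,
\end{equation*}
where the first inequality uses $\alpha_e > 0$ together with the inductive bound $\Sigma(W')(\phi_\Gamma(x)) \le x_w$, and the second inequality uses that, by \eqref{eqn:phi},
\begin{equation*}
    \phi_\Gamma(x)_v \ = \ x_v - \max_{v \xrightarrow{e'} w'} (\alpha_{e'} x_{w'} + c_{e'}) \ \le \ x_v - (\alpha_e x_w + c_e),
\end{equation*}
since $e$ is one of the edges out of $v$ appearing in the maximum. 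Thus $\Sigma(W)(\phi_\Gamma(x)) \le x_v$ for every finite walk $W \in \WW_v$.

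It remains to handle infinite walks $W = v_1 \xrightarrow{e_1} v_2 \xrightarrow{e_2} \cdots$ with all $v_i \in V$. By \Cref{prop:convergence} the series defining $\Sigma(W)(\phi_\Gamma(x))$ converges, so $\Sigma(W)(\phi_\Gamma(x)) = \lim_{r \to \infty} \Sigma(W^{(r)})(\phi_\Gamma(x))$ where $W^{(r)} = v_1 \xrightarrow{e_1} \cdots \xrightarrow{e_r} v_{r+1}$ is the truncation regarded as a finite walk ending at the node $v_{r+1} \in V$ — which is legitimate since every sink is marked, but in fact we only need the partial-sum estimate. Applying the finite-walk bound just proved (or, more directly, the same one-step estimate iterated $r$ times starting from the trivial bound at the truncation point, absorbing the tail term $\bigl(\prod_{j=1}^r \alpha_{e_j}\bigr)\phi_\Gamma(x)_{v_{r+1}}$ into the chain of inequalities) yields $\Sigma(W^{(r)})(\phi_\Gamma(x)) \le x_{v_1}$ for all $r$, and passing to the limit gives $\Sigma(W)(\phi_\Gamma(x)) \le x_{v_1}$. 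Taking the supremum over all $W \in \WW_v$ finishes the proof. The main subtlety to get right is the bookkeeping at marked nodes versus unmarked nodes when unrolling the recursion, and ensuring the tail term in the truncated infinite walk is controlled correctly so that the limit argument is clean; the algebraic core is the single-edge inequality above, which is essentially immediate from the definition of $\phi_\Gamma$.
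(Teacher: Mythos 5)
Your treatment of finite walks is correct and is essentially the paper's own argument: the single-edge inequality $\phi_\Gamma(x)_v \le x_v - (\alpha_e x_w + c_e)$ is exactly the estimate the paper applies to each summand, and your induction via the recursion \eqref{eq:Sigma-recursion} is just the paper's telescoping sum unrolled one edge at a time. The bookkeeping at marked versus unmarked endpoints is also handled correctly, since a walk in $\WW$ can only have its last node marked.

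The infinite-walk case, however, has a genuine flaw as written. The truncation $W^{(r)}$ ends at an unmarked node $v_{r+1}\in V$, so it is not a walk in $\WW$ and the finite-walk bound does not apply to it; worse, the claimed inequality $\Sigma(W^{(r)})(\phi_\Gamma(x))\le x_{v_1}$ is false in general, because its terminal term is $\bigl(\prod_{j\le r}\alpha_{e_j}\bigr)\phi_\Gamma(x)_{v_{r+1}}$ and $\phi_\Gamma(x)_u\le x_u$ need not hold for $u\in V$ (it fails whenever $\max_{u\to w}(\alpha_e x_w+c_e)<0$). Concretely, take $V=\{v,w\}$, $A=\varnothing$, a two-cycle $v\to w\to v$ with $\alpha_e=\tfrac12$ and $c_e=0$ on both edges (gainy, no sinks), and $x=(-10,-10)$: then $\phi_\Gamma(x)=(-5,-5)$ and $\Sigma(W^{(r)})(\phi_\Gamma(x))=-10+5\cdot 2^{-r}>x_{v}$ for every $r$, even though the limit equals $x_{v}$. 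The correct repair is the one your parenthetical gestures at: iterating the one-step estimate with terminal value $x_{v_{r+1}}$ rather than $\phi_\Gamma(x)_{v_{r+1}}$ gives the partial-sum bound
\begin{equation*}
    \sum_{k=1}^{r}\Bigl(\prod_{j=1}^{k-1}\alpha_{e_j}\Bigr)\bigl(\phi_\Gamma(x)_{v_k}+c_{e_k}\bigr)\ \le\ x_{v_1}-\Bigl(\prod_{j=1}^{r}\alpha_{e_j}\Bigr)x_{v_{r+1}},
\end{equation*}
and one must then invoke that $\prod_{j=1}^{r}\alpha_{e_j}\to 0$ (this follows from the root-test estimate in the proof of \Cref{prop:convergence} and is precisely where gaininess enters) to kill the correction term before passing to the limit. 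The paper's proof absorbs this into the identity $\sum_{k\ge1}\bigl(\prod_{j<k}\alpha_{e_j}\bigr)(x_{v_k}-\alpha_{e_k}x_{v_{k+1}})=x_{v_1}$, which rests on the same vanishing. Your write-up names this as ``the main subtlety'' but does not carry it out, and the route you do carry out is the one that breaks.
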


\begin{proof}
    Let \(y=\phi_\Gamma(x)\) for \(x\in\R^V\).
    For a finite walk \(W\in\WW\) as in \eqref{eq:fin-walk} starting in \(v_1=v\), we have
    \begin{align*}
        \Sigma(W)(y) &=
        \sum_{k=1}^r \left( \prod_{j=1}^{k-1} \alpha_{e_j} \right) \left( y_{v_k} + c_{e_k} \right) +
        \left( \prod_{j=1}^r \alpha_{e_j} \right) y_{v_{r+1}}
        \\&=
        \sum_{k=1}^r \left( \prod_{j=1}^{k-1} \alpha_{e_j} \right) \left( x_{v_k} - \max_{v_k\ixrightarrow{e}w} (\alpha_e x_w + c_e) + c_{e_k} \right) +
        \left( \prod_{j=1}^r \alpha_{e_j} \right) x_{v_{r+1}}
        \\&\le
        \sum_{k=1}^r \left( \prod_{j=1}^{k-1} \alpha_{e_j} \right) \left( x_{v_k} - \alpha_{e_k} x_{v_{k+1}} \right) +
        \left( \prod_{j=1}^r \alpha_{e_j} \right) x_{v_{r+1}}
        \\&=
        x_{v_1} = x_v.
    \end{align*}
    For an infinite walk as in \eqref{eq:inf-walk} starting in \(v_1=v\), we have
    \begin{align*}
        \Sigma(W)(y) &=
        \sum_{k=1}^\infty \left( \prod_{j=1}^{k-1} \alpha_{e_j} \right) \left( y_{v_k} + c_{e_k} \right)
        \\&\le
        \sum_{k=1}^\infty \left( \prod_{j=1}^{k-1} \alpha_{e_j} \right) \left( x_{v_k} - \alpha_{e_k} x_{v_{k+1}} \right) = x_{v_1} = x_v. \qedhere
    \end{align*}
\end{proof}

\begin{proof}[{Proof of \Cref{thm:inverse-transfer}}]
    For \(v\in V\), all walks \(W\in\WW_v\) are of the form
    \(v\lxrightarrow{e}W'\) for an edge \(v\lxrightarrow{e}w\) and
    \(W'\in\WW_w\).
    Hence, by the recursive property \eqref{eq:Sigma-recursion} we have
    \begin{equation*}
        \Sigma(W)(y) = \alpha_e \Sigma(W')(y) + (y_v+c_e).
    \end{equation*}
    We conclude that \(\psi\) satisfies the recursion
    \begin{equation*}
        \psi_\Gamma(y)_v = y_v + \max_{v\ixrightarrow{e} w} \left(\alpha_e \psi_\Gamma(y)_w + c_e\right) \quad\text{for all \(v\in V\)}.
    \end{equation*}
    Comparing this to the definition of \(\phi_\Gamma\), we see that \(\phi_\Gamma\circ\psi_\Gamma\) is the identity on \(\R^V\).

    Regarding the composition \(\psi_\Gamma\circ\phi_\Gamma\), first note that \(\psi_\Gamma(\phi_\Gamma(x))_v \le x_v\) for all \(v\in V\) by \Cref{lem:psi-bound}.
    Hence, to show that \(\psi_\Gamma(\phi_\Gamma(x))_v = x_v\) for \(v\in
    V\), it is enough to construct a walk \(W\in\WW_{v}\) such that
    \(\Sigma(W)(\phi_\Gamma(x)) \ge x_v\). Let \(v_1=v\) and successively pick an edge \(v_k\lxrightarrow{e_k} v_{k+1}\) such that
    \begin{equation*}
        \alpha_{e_k} x_{v_{k+1}} + c_{e_k} = \max_{v_k\ixrightarrow{e}w} \left( \alpha_e x_w + c_e \right),
    \end{equation*}
    until either \(v_{k+1}\) is marked or \(v_{k+1}\) already appeared in \(\{v_1,\dots,v_k\}\).

    In the first case we constructed a finite walk \(W\in\WW_v\) as in \eqref{eq:fin-walk} satisfying
    \begin{align*}
        \Sigma(W)(\phi_\Gamma(x)) &=
        \sum_{k=1}^r \left( \prod_{j=1}^{k-1} \alpha_{e_j} \right) \left( \phi_\Gamma(x)_{v_k} + c_{e_k} \right) +
        \left( \prod_{j=1}^r \alpha_{e_j} \right) \phi_\Gamma(x)_{v_{r+1}}
        \\&=
        \sum_{k=1}^r \left( \prod_{j=1}^{k-1} \alpha_{e_j} \right) \left( x_{v_k} - \alpha_{e_k} x_{v_{k+1}} \right) +
        \left( \prod_{j=1}^r \alpha_{e_j} \right) x_{v_{r+1}} = x_{v_1} = x_v.
    \end{align*}

    In the second case, we ended at an unmarked element \(v_{r+1}=v_s\) for \(s\le r\).
    This yields an infinite walk \(W\in\WW_v\) of the form
    \begin{equation}
        v_1 \xrightarrow{e_1} \cdots \xrightarrow{e_{s-1}} v_s \xrightarrow{e_s} \cdots \xrightarrow{v_{r-1}} e_r \xrightarrow{e_r} v_s \xrightarrow{e_s} \cdots \xrightarrow{v_{r-1}} e_r \xrightarrow{e_r} v_s \xrightarrow{e_s} \cdots \label{eq:monocycle}
    \end{equation}
    That is, \(W\) walks from \(v_1\) to \(v_s\) and then infinitely often runs through the cycle
    \begin{equation*}
        v_s \xrightarrow{e_s} v_{s+1} \xrightarrow{e_{s+1}} \cdots \xrightarrow{v_{r-1}} e_r \xrightarrow{e_r} v_s.
    \end{equation*}
    Treating indices \(k>r\) accordingly, we obtain
    \begin{align*}
        \Sigma(W)(\phi_\Gamma(x)) &=
        \sum_{k=1}^\infty \left( \prod_{j=1}^{k-1} \alpha_{e_j} \right) \left( \phi_\Gamma(x)_{v_k} + c_{e_k} \right)
        \\&=
        \sum_{k=1}^\infty \left( \prod_{j=1}^{k-1} \alpha_{e_j} \right) \left( x_{v_k} - \alpha_{e_k} x_{v_{k+1}} \right) = x_{v_1} = x_v.
    \end{align*}

    In both cases \(\Sigma(W)(\phi_\Gamma(x))=x_v\) and we obtain \(\psi_\Gamma(\phi_\Gamma(x))_v=x_v\) as desired.
    We conclude that \(\phi_\Gamma\) and \(\psi_\Gamma\) are mutually inverse piecewise-linear self-maps of \(\R^V\).
\end{proof}

Inspecting the proof of \Cref{thm:inverse-transfer}, we see that only a finite
subset of \(\WW\) is necessary to define \(\psi_\Gamma\).  Namely, the paths
with only the last node marked and the infinite walks that keep repeating a
cycle after a finite number of steps as in \eqref{eq:monocycle}.  We will
refer to walks of the latter kind as \Def{monocycles} and denote them by $W =
P * C$, where $P$ is the path and $C$ the cycle. Note that only the end node
of $P$ is shared with $C$.  A visual representation of a monocycle can
be found in \Cref{fig:monocycle}.
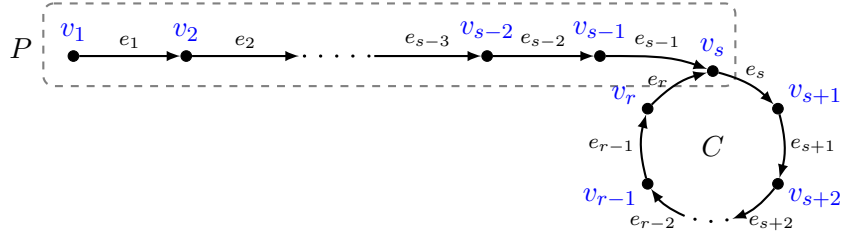
\begin{figure}
    \centering
    \begin{tikzpicture}
        \draw[rounded corners, dashed, thick, gray] (-8.9,1.9) -- (.3,1.9) -- (.3,.8) -- (-8.9,.8) -- cycle;

        \path (90:1) node[posetelm] (a) {} node[above=2pt,elmname] {\(v_s\)};
        \path (30:1) node[posetelm] (b) {} node[above right,elmname,yshift=-2pt] {\(v_{s+1}\)};
        \path (330:1) node[posetelm] (c) {} node[below right,elmname,yshift=2pt] {\(v_{s+2}\)};
        \path (210:1) node[posetelm] (e) {} node[below left, elmname,yshift=2pt] {\(v_{r-1}\)};
        \path (150:1) node[posetelm] (f) {} node[above left, elmname,yshift=-2pt] {\(v_r\)};

        \coordinate (x) at (-75:1);
        \coordinate (y) at (-112:1);

        \draw[netedge] (a) to[bend left=15] (b);
        \draw[netedge] (b) to[bend left=15] (c);
        \draw[netedge] (c) to[bend left=15] (x);
        \draw[dots] (x) to[bend left=15] (y);
        \draw[netedge] (y) to[bend left=15] (e);
        \draw[netedge] (e) to[bend left=15] (f);
        \draw[netedge] (f) to[bend left=15] (a);

        \path (60:.9) node[anchor=240] {\(\scriptstyle e_s\)};
        \path (00:.9) node[anchor=180] {\(\scriptstyle e_{s+1}\)};
        \path (-52.5:1) node[anchor=127.5] {\(\scriptstyle e_{s+2}\)};
        \path (-128:1) node[anchor=52] {\(\scriptstyle e_{r-2}\)};
        \path (180:.9) node[anchor=0] {\(\scriptstyle e_{r-1}\)};
        \path (130:.85) node[anchor=310] {\(\scriptstyle e_{r}\)};

        \path (-1.5,1.2) node[posetelm] (p) {} node[above=2pt,elmname] {\(v_{s-1}\)};
        \path (-3,1.2) node[posetelm] (p3) {} node[above=2pt,elmname] {\(v_{s-2}\)};
        \coordinate (q2) at (-4.5,1.2);
        \coordinate (q1) at (-5.5,1.2);
        \path (-7,1.2) node[posetelm] (p2) {} node[above=2pt,elmname] {\(v_{2}\)};
        \path (-8.5,1.2) node[posetelm] (p1) {} node[above=2pt,elmname] {\(v_{1}\)};

        \draw[netedge] (p) to[out=0, in=160] node[midway, above=-1pt]
        {\(\scriptstyle e_{s-1}\)} (a);
        \draw[netedge] (p3) to node[midway, above=-1pt] {\(\scriptstyle e_{s-2}\)} (p);
        \draw[netedge] (q2) to node[midway, above=-1pt] {\(\scriptstyle e_{s-3}\)} (p3);
        \draw[dots] (q1) to (q2);
        \draw[netedge] (p2) to node[midway, above=-1pt] {\(\scriptstyle e_2\)} (q1);
        \draw[netedge] (p1) to node[midway, above=-1pt] {\(\scriptstyle e_1\)} (p2);

        \node at (-9.2,1.3) {\(P\)};
        \node at (0,0) {\(C\)};
    \end{tikzpicture}
    \caption[A monocycle]{A monocycle. Note that the visible nodes are pairwise distinct.}
    \label{fig:monocycle}
\end{figure}

\begin{definition}
    Let \(\MW \subseteq \WW\) be the subset of walks \(W \in \WW\)
    such that \(W\) is either a path or a monocycle as in \eqref{eq:monocycle}
    with pairwise distinct \(v_1,\dots,v_r\).  For \(v\in V\), Denote by
    \(\MW_v=\MW\cap\WW_v\) the set of walks in \(\MW\)
    starting in \(v\).
\end{definition}

\begin{cor}
    The inverse transfer map \(\psi_\Gamma\colon\R^V\to\R^V\) is given by
    \begin{equation*}
        \psi_\Gamma(y)_v = \max_{W \in\MW_v} \Sigma(W )(y).
    \end{equation*}
\end{cor}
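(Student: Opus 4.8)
The plan is to extract the corollary directly from \Cref{thm:inverse-transfer}, and in fact from the specific walk produced in its proof. What has to be shown is that, for every $y\in\R^V$ and $v\in V$,
\[
    \psi_\Gamma(y)_v \ = \ \sup_{W\in\WW_v}\Sigma(W)(y) \ = \ \max_{W\in\MW_v}\Sigma(W)(y),
\]
the first equality being \Cref{thm:inverse-transfer}. First I would record that $\MW_v$ is finite: a finite network has only finitely many paths, and a monocycle in $\MW$ is determined by a path $P$ and a cycle $C$ with pairwise distinct node data, of which there are again finitely many, so the right-hand maximum is well defined. Moreover $\MW_v$ is nonempty, because every sink of $\Gamma$ is marked, hence every unmarked node has an outgoing edge and a walk from $v$ can always be prolonged. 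The inequality $\max_{W\in\MW_v}\Sigma(W)(y)\le\psi_\Gamma(y)_v$ is then immediate from $\MW_v\subseteq\WW_v$.

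For the reverse inequality I would invoke surjectivity of $\phi_\Gamma$, also granted by \Cref{thm:inverse-transfer}: write $y=\phi_\Gamma(x)$ and rerun the greedy construction from the proof of that theorem, successively choosing edges $v_k\xrightarrow{e_k}v_{k+1}$ attaining the maximum defining $\phi_\Gamma(x)_{v_k}$ and stopping the first time the new node is marked or has already appeared. The key point is that the resulting walk $W$ automatically lies in $\MW_v$: if the procedure terminates at a marked node, then $v_1,\dots,v_{r+1}$ are pairwise distinct with only $v_{r+1}\in A$, so $W$ is a path of the form \eqref{eq:fin-walk}; if it terminates because a node repeats, then $W$ is precisely a monocycle as in \eqref{eq:monocycle} with $v_1,\dots,v_r$ pairwise distinct. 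In both cases the computation already carried out in the proof of \Cref{thm:inverse-transfer} gives $\Sigma(W)(y)=\Sigma(W)(\phi_\Gamma(x))=x_v$, while $x_v=\psi_\Gamma(\phi_\Gamma(x))_v=\psi_\Gamma(y)_v$ since $\psi_\Gamma=\phi_\Gamma^{-1}$. Hence $\psi_\Gamma(y)_v$ is attained by a walk in $\MW_v$, which closes the chain of equalities.

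I do not anticipate a genuine obstacle here: the argument is essentially bookkeeping about which walks the greedy procedure of \Cref{thm:inverse-transfer} can output. The only points needing a little care are (a) that a ``path'' in the sense of \Cref{def:lossygainy}, when viewed inside $\WW$, is exactly a finite walk \eqref{eq:fin-walk} whose only marked node is the terminal one, so the terminating case really does land in $\MW_v$; and (b) that the greedy procedure never gets stuck, which is precisely where the hypothesis that all sinks are marked is used, and which also reproves $\MW_v\neq\emptyset$.
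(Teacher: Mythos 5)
Your argument is correct and is precisely the one the paper intends: the authors justify the corollary by the remark that "inspecting the proof of \Cref{thm:inverse-transfer}, only a finite subset of $\WW$ is necessary," namely the paths and monocycles produced by the greedy edge-selection, and you have simply written out that inspection, including the two small points (finiteness/nonemptiness of $\MW_v$ and that the greedy walk lands in $\MW_v$) that the paper leaves implicit.
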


Since some of the \(W \in \MW_v\) appearing in this description of the inverse
transfer map might be monocycles, we want to give a finite expression for the
linear form \(\Sigma(W)\).

\begin{prop} \label{prop:monocycle-sum}
    Let \(W = P * C \in \MW\) be a monocycle with
    \begin{align*}
        P \ &= \ v_1 \xrightarrow{e_1} \cdots \xrightarrow{e_{s-1}} v_{s}, \\
        C \ &= \  v_s \xrightarrow{e_s} \cdots \xrightarrow{e_{r-1}} v_r \xrightarrow{e_r} v_s.
    \end{align*}
    Then for all \(x\in\R^V\) we have
    \begin{equation*}
        \Sigma(W)(x) \ = \ \sum_{k=1}^{s-1} \left( \prod_{j=1}^{k-1}
        \alpha_{e_j} \right) \left( x_{v_k} + c_{e_k} \right) +
        \frac{\alpha(P)}{1-\alpha(C)}
        \sum_{k=s}^r \left( \prod_{j=s}^{k-1} \alpha_{e_j} \right) \left( x_{v_k} + c_{e_k} \right).
    \end{equation*}
\end{prop}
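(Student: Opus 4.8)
The plan is to break the defining series for $\Sigma(W)(x)$ into the contribution of the finite path $P$ and the contributions of the successive traversals of the cycle $C$, and then to evaluate the resulting geometric series. Write $L := r-s+1$ for the number of edges of $C$, so that $\alpha(C) = \prod_{j=s}^{r}\alpha_{e_j}$ and $\alpha(P) = \prod_{j=1}^{s-1}\alpha_{e_j}$. As an infinite walk, $W$ runs through $v_1,\dots,v_{s-1}$ once and then repeats the block $v_s,v_{s+1},\dots,v_r$ forever; accordingly, for $k \ge s$ the $k$-th node and $k$-th edge of $W$ are $v_{s+i}$ and $e_{s+i}$, where $i \in \{0,\dots,L-1\}$ is determined by $k \equiv s+i \pmod L$. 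The first step is to record the behaviour of the prefix weights: for $k = s+mL+i$ with $m \ge 0$ and $0 \le i \le L-1$,
\[
    \prod_{j=1}^{k-1}\alpha_{e_j} \ = \ \alpha(P)\,\alpha(C)^{m}\prod_{j=s}^{s+i-1}\alpha_{e_j},
\]
which follows by peeling off the $s-1$ path edges and grouping the remaining $mL+i$ cycle edges into $m$ complete loops (each of weight $\alpha(C)$) followed by a partial run of $i$ edges.

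Next I would substitute this into $\Sigma(W)(x) = \sum_{k \ge 1}\bigl(\prod_{j=1}^{k-1}\alpha_{e_j}\bigr)(x_{v_k}+c_{e_k})$, split off the head $k = 1,\dots,s-1$, and rewrite the tail $k \ge s$ as a double sum over $m \ge 0$ and $0 \le i \le L-1$; this rearrangement is legitimate by the absolute convergence established in \Cref{prop:convergence}. In the tail the summand factors as $\alpha(P)\,\alpha(C)^{m}\bigl(\prod_{j=s}^{s+i-1}\alpha_{e_j}\bigr)(x_{v_{s+i}}+c_{e_{s+i}})$, and since $\Gamma$ is gainy we have $\alpha(C) < 1$, so $\sum_{m \ge 0}\alpha(C)^m = \tfrac{1}{1-\alpha(C)}$. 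This yields
\[
    \sum_{k \ge s}\Bigl(\prod_{j=1}^{k-1}\alpha_{e_j}\Bigr)(x_{v_k}+c_{e_k}) \ = \ \frac{\alpha(P)}{1-\alpha(C)}\sum_{i=0}^{L-1}\Bigl(\prod_{j=s}^{s+i-1}\alpha_{e_j}\Bigr)(x_{v_{s+i}}+c_{e_{s+i}}),
\]
and re-indexing by $k = s+i$ (so that $\prod_{j=s}^{s+i-1} = \prod_{j=s}^{k-1}$ and $k$ ranges over $s,\dots,r$) turns the remaining sum into $\sum_{k=s}^{r}\bigl(\prod_{j=s}^{k-1}\alpha_{e_j}\bigr)(x_{v_k}+c_{e_k})$. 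Adding back the head $\sum_{k=1}^{s-1}\bigl(\prod_{j=1}^{k-1}\alpha_{e_j}\bigr)(x_{v_k}+c_{e_k})$ gives the asserted formula.

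I do not expect a genuine obstacle here; the only thing requiring care is the bookkeeping of two coexisting index conventions — the position $k$ along the infinite walk versus the position $s+i$ inside one copy of the cycle — together with the convention that the empty product $\prod_{j=s}^{s-1}\alpha_{e_j}$ equals $1$, so that the $k=s$ term indeed carries the coefficient $\alpha(P)/(1-\alpha(C))$. If one prefers to avoid the explicit reindexing, an alternative is to use the recursion \eqref{eq:Sigma-recursion}: unrolling it $s-1$ times along $P$ writes $\Sigma(W)(x)$ as the head sum plus $\alpha(P)\,U$, where $U = \Sigma(W')(x)$ for the purely periodic walk $W'$ based at $v_s$; unrolling the recursion $L$ more times, around one full turn of $C$, and using that this leaves $W'$ unchanged gives $U = \sum_{k=s}^{r}\bigl(\prod_{j=s}^{k-1}\alpha_{e_j}\bigr)(x_{v_k}+c_{e_k}) + \alpha(C)\,U$, whence $U = \tfrac{1}{1-\alpha(C)}\sum_{k=s}^{r}\bigl(\prod_{j=s}^{k-1}\alpha_{e_j}\bigr)(x_{v_k}+c_{e_k})$ since $\alpha(C) < 1$. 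Substituting back recovers the formula. Either route is a short computation resting on \Cref{prop:convergence}.
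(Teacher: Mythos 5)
Your argument is correct and follows the same route as the paper's proof: both regroup the tail of the defining series for $\Sigma(W)$ into successive traversals of $C$, each carrying a factor $\alpha(C)^m$, and then sum the geometric series using $\alpha(C)<1$; you merely make explicit the index bookkeeping that the paper compresses into a single displayed regrouping. The recursion-based alternative you sketch is a pleasant variant but not a genuinely different method.
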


\begin{proof}
    The infinite series in \Cref{def:sigma} yields that \(\Sigma(W)(x)\) is
    equal to
    \begin{equation} \label{eq:monocycle-series}
        \sum_{k=1}^{s-1} \left( \prod_{j=1}^{k-1} \alpha_{e_j} \right) \left( x_{v_k} + c_{e_k} \right)
        +
        \alpha(P)
        \sum_{l=0}^\infty \left[ \alpha(C)^l
        \sum_{k=s}^r \left( \prod_{j=s}^{k-1} \alpha_{e_j} \right) \left( x_{v_k} + c_{e_k} \right) \right].
    \end{equation}
    Since all cycles in \(\Gamma\) are gainy, we have \(\alpha(C)<1\) and the
    geometric series \(\sum_{l=0}^\infty \alpha(C)^l\) converges to
    \((1-\alpha(C))^{-1}\).
\end{proof}

Let $x \in \R^V$. Now for every $v \in V$, select an edge $v \to w$ which
attains the maximum in the definition of $\phi_\Gamma(x)_v$
in~\eqref{eqn:phi}.  It becomes clear from our discussion that this yields a
marked subnetwork $\Gamma_x$ composed of paths and monocycles. More precisely,
deleting the cycles, leaves a \emph{rooted} forest, that is, an acyclic
digraph in which every nodes has one edge pointing out. This network realizes
$\phi_\Gamma$ as an affine-linear function at $x$. The matrix $B = B(\Gamma_x)
\in \R^{V \times V}$ with $B_{ww} = 1$ and 
\[
    B_{wv} \ = \ -\alpha_e \quad \text{if } w \xrightarrow{e} v
\]
and $0$ otherwise determines the linear part of $\phi_\Gamma$ at $x$. The
determinant of $B$ is $\prod_C (1-\alpha(C))$ where $C$ ranges over the cycles
in $\Gamma_x$.

\begin{cor}\label{cor:vol_lattice}
    Let $\Gamma = (V\uplus A,E,\alpha,c,\lambda)$ be a gainy marked network
    with all sinks marked.  If $\alpha(C) = 2$ for all cycles $C$, then
    $\phi_\Gamma$ is volume preserving. If the weights $\alpha$ and $c$ are
    integral, then $\phi_\Gamma$ is lattice-preserving if and only if for
    every cycle there is a unique edge $e'$ with weight $\alpha_{e'}= 2$ and
    all other edges have weight $\alpha_e=1$.
\end{cor}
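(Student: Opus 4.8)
The plan is to localize $\phi_\Gamma$ on its linearity regions and read everything off the determinant computation recorded just before the statement. Choosing, for every $v\in V$, an edge $v\xrightarrow{e}w$ attaining the maximum in~\eqref{eqn:phi} realizes $\phi_\Gamma$, near the chosen point, as an affine map $x\mapsto B(\Gamma_x)\,x+d$ whose translation vector $d$ has entries $-c_e$ or $-\alpha_e\lambda_a-c_e$ and whose linear part $B(\Gamma_x)$ has $1$'s on the diagonal and off-diagonal entries $-\alpha_e$. These choices cut $\R^V$ into finitely many linearity regions $R$, each a full-dimensional polyhedron, pairwise meeting in sets of measure zero, on which $\phi_\Gamma$ agrees with an affine map $x\mapsto B_R x+d_R$. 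Two facts will be used repeatedly: every cycle occurring in some $\Gamma_x$ is a cycle of $\Gamma$, and $\det B_R=\prod_C(1-\alpha(C))$ over the cycles $C$ of the corresponding subnetwork; and each region $R$, as well as its image $\phi_\Gamma(R)$, contains arbitrarily large balls, because the recession cone of $R$ is the linearity region, for the same choice of edges, of the homogenized transfer map $\phi_{\bar\Gamma}$ (the network $\bar\Gamma$ obtained by setting all $c_e$ and $\lambda_a$ to $0$), which is again a piecewise-linear homeomorphism by \Cref{thm:inverse-transfer} and whose linearity regions are full-dimensional cones covering $\R^V$.

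For the volume statement, suppose $\alpha(C)=2$ for every cycle $C$ of $\Gamma$. Then on each region the relevant factors are $1-\alpha(C)=-1$, hence $\lvert\det B_R\rvert=1$. As $\phi_\Gamma$ is a bijection of $\R^V$ (\Cref{thm:inverse-transfer}) that is affine with unimodular linear part on each $R$, for any measurable $S$ we obtain $\vol(\phi_\Gamma(S))=\sum_R\vol(\phi_\Gamma(S\cap R))=\sum_R\lvert\det B_R\rvert\cdot\vol(S\cap R)=\vol(S)$, the overlaps contributing nothing. Thus $\phi_\Gamma$ is volume preserving.

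For the lattice statement, assume in addition that $\alpha$, $c$ and the markings $\lambda$ are integral, so that every $B_R$ is an integer matrix and every $d_R\in\Z^V$. From~\eqref{eqn:phi} we have $\phi_\Gamma(\Z^V)\subseteq\Z^V$ for free, a maximum of integers being an integer; since $\psi_\Gamma=\phi_\Gamma^{-1}$, the map $\phi_\Gamma$ is lattice preserving if and only if $\psi_\Gamma(\Z^V)\subseteq\Z^V$, equivalently if and only if each affine piece $y\mapsto B_R^{-1}(y-d_R)$ of $\psi_\Gamma$ sends $\Z^V$ into $\Z^V$. Because $\phi_\Gamma(R)$ contains, for every basis vector $e_j$, two lattice points differing by $e_j$, the latter forces every column of $B_R^{-1}$ to be integral, whence $B_R^{-1}$ — and then, with integrality of $d_R$, the whole affine piece — is integral; conversely an integer matrix of determinant $\pm1$ has an integer inverse. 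Hence $\phi_\Gamma$ is lattice preserving if and only if $\det B_R=\pm1$ for every region $R$. Now each factor $1-\alpha(C)$ is a non-positive integer (the weights being positive integers) and none is $0$ (since $B_R$ is invertible), so $\prod_C(1-\alpha(C))=\pm1$ exactly when every factor equals $-1$, i.e. $\alpha(C)=2$ for every cycle $C$ of the subnetwork; and for positive integral weights, $\alpha(C)=\prod_{e\in C}\alpha_e=2$ holds precisely when $C$ has a unique edge of weight $2$ and all other edges have weight $1$, since $2$ is prime. If every cycle of $\Gamma$ has this form, then all $\det B_R=\pm1$ and $\phi_\Gamma$ is lattice preserving.

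The remaining and most delicate point is the converse: if some cycle $C_0$ of $\Gamma$ has $\alpha(C_0)\neq2$, one must produce a point $x^\star$ in the interior of some linearity region whose subnetwork $\Gamma_{x^\star}$ contains $C_0$ — then the factor $1-\alpha(C_0)$ has absolute value $\ge2$, so $\lvert\det B_{R}\rvert\ge2$, and the criterion above fails. Realizing $C_0$ is the crux: one wants node values for which, along $C_0$, each edge strictly beats the competing out-edges, and one arranges this by pushing the values of the nodes off $C_0$ far below those on $C_0$ (which kills the edges leaving $C_0$) while choosing the values on $C_0$ — up to a small generic perturbation — so that the edges internal to $C_0$ come out on top; one should moreover check that the region so obtained still has a full-dimensional recession cone, which is exactly what guarantees the "two lattice points differing by $e_j$" property invoked above. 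Everything else is the bookkeeping of the determinant formula together with the elementary linear algebra of unimodular integer matrices.
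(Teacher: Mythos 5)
Your route is the same as the paper's: the corollary is meant to be read off from the determinant formula $\det B(\Gamma_x)=\prod_C(1-\alpha(C))$ on the linearity regions, and the paper supplies no argument beyond that discussion. Your handling of the volume statement and of the ``if'' direction of the lattice statement is correct and complete, including the (correct) observation that one must also take $\lambda$ integral to get $\phi_\Gamma(\Z^V)\subseteq\Z^V$.

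The genuine gap is the ``only if'' direction, exactly where you flag it but do not close it. First, your supporting claim that every linearity region $R$ and its image contain arbitrarily large balls is false as stated: a full-dimensional region can have a lower-dimensional recession cone. With three parallel edges from $v$ to $w$ of weights $(\alpha,c)=(1,0),(2,-1),(3,-3)$, the region where the middle edge attains the maximum is the slab $1\le x_w\le 2$, whose homogenization is the hyperplane $x_w=0$; its affine image is again a slab and need not contain two lattice points differing by a prescribed $e_j$. So the implication ``lattice preserving $\Rightarrow B_R^{-1}$ integral'' is not yet justified for all regions. Second, realizing an arbitrary cycle $C_0$ of $\Gamma$ inside some $\Gamma_{x^\star}$ is genuinely delicate and not just bookkeeping: a node of $C_0$ may have a competing out-edge into a \emph{marked} node, whose value $\alpha_e\lambda_a+c_e$ is fixed and cannot be pushed down, which forces the values along $C_0$ to be large; a chord inside $C_0$ of larger weight then beats the cycle edge precisely for such large values. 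A priori some cycles of $\Gamma$ may occur in no $\Gamma_x$ at all, in which case they are invisible to the determinant criterion and the ``only if'' needs a different argument. Finally, you should have noticed a tension in the hypotheses themselves: gainy means $\alpha(C)<1$ for every cycle, which is incompatible both with $\alpha(C)=2$ and with positive integral edge weights on a cycle, so as literally stated the corollary has content only in the acyclic case; your proof, like the paper's discussion, silently passes over this.
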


\section{Anti-blocking images}\label{sec:ab-image}

In the previous section, we showed that distributive polyhedra given by gainy
marked networks with at least all sinks marked admit a piecewise-linear
bijective transfer map \(\phi_\Gamma\colon\R^V\to\R^V\) analogous to the
transfer map for (marked) order polytopes.  In this section we keep the same
premise and focus on the image \(\phi_\Gamma(\Ord(\Gamma))\).  We show that it
is an anti-blocking polyhedron with describing inequalities given by the walks
in \(\MW\), similar to the chain polytope being described by
inequalities given by chains in the poset.

\begin{definition} \label{def:network-ab}
    Let \(\Gamma=(V\uplus A,E,\alpha,c,\lambda)\) be a gainy marked network
    with at least all sinks marked.  The polyhedron \(\Chain(\Gamma)\) is the
    set of all \(y\in\R^V\) with $y \ge 0$ and 
    \begin{equation}\label{eqn:monocyc_ineq}
                \alpha_e \Sigma(W)(y) + c_e \ \le \ \lambda_a 
    \end{equation}
    for each walk \(a \lxrightarrow{e} W\) with $a\in A$ and $W\in\MW$.
\end{definition}

\begin{thm}\label{thm:PL_homeo}
    Let \(\Gamma=(V\uplus A,E,\alpha,c,\lambda)\) be a gainy marked network
    with at least all sinks marked.  The transfer map \(\phi_\Gamma\)
    restricts to a piecewise-linear homeomorphism
    \(\Ord(\Gamma)\to\Chain(\Gamma)\).
\end{thm}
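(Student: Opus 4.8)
The plan is to build on \Cref{thm:inverse-transfer}, which already shows that $\phi_\Gamma$ is a piecewise-linear self-homeomorphism of $\R^V$ with continuous piecewise-linear inverse $\psi_\Gamma$. Restricting a homeomorphism to a subset yields a homeomorphism onto the image, and a restriction of a piecewise-linear map is piecewise-linear; so the entire content of the theorem is the set equality $\phi_\Gamma(\Ord(\Gamma)) = \Chain(\Gamma)$. I would first record that $\Chain(\Gamma)$ really is a polyhedron: the paths and the monocycles with pairwise distinct $v_1,\dots,v_r$ are finite in number, and by \Cref{prop:monocycle-sum} every $\Sigma(W)$ with $W\in\MW$ is an explicit affine-linear form, so \Cref{def:network-ab} is a finite system of linear inequalities. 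Since $\phi_\Gamma$ is bijective on $\R^V$, it then suffices to prove, for an arbitrary $x\in\R^V$, the equivalence $x\in\Ord(\Gamma) \Longleftrightarrow \phi_\Gamma(x)\in\Chain(\Gamma)$.

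First suppose $x\in\Ord(\Gamma)$ and set $y=\phi_\Gamma(x)$. Every unmarked node has an outgoing edge, since all sinks are marked, so the maximum in \eqref{eqn:phi} runs over a nonempty set, and \eqref{eqn:distr_ineqs} forces $\alpha_e x_w + c_e \le x_v$ for each such edge $v\xrightarrow{e}w$; hence $y_v = x_v - \max_{v\to w}(\alpha_e x_w + c_e) \ge 0$. For the inequalities in \Cref{def:network-ab}, fix a marked node $a$, an edge $a\xrightarrow{e}w$, and a walk $W\in\MW_w$. Applying \Cref{lem:psi-bound} at the node $w$ gives $\Sigma(W)(y)\le\sup_{W'\in\WW_w}\Sigma(W')(y)\le x_w$, so, using $\alpha_e>0$ together with $\alpha_e x_w+c_e\le x_a=\lambda_a$ from \eqref{eqn:distr_ineqs}, we obtain $\alpha_e\Sigma(W)(y)+c_e \le \alpha_e x_w + c_e \le \lambda_a$. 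Thus $y\in\Chain(\Gamma)$.

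Conversely, suppose $y:=\phi_\Gamma(x)\in\Chain(\Gamma)$; then $x=\psi_\Gamma(y)$, and I must verify $\alpha_e x_w+c_e\le x_v$ for every edge $v\xrightarrow{e}w$, where $x_u:=\lambda_u$ for $u\in A$. If $v\in V$, the recursion $\psi_\Gamma(y)_v = y_v + \max_{v\to w'}(\alpha_{e'} x_{w'}+c_{e'})$ established in the proof of \Cref{thm:inverse-transfer} bounds the term attached to the chosen edge: $\alpha_e x_w + c_e \le x_v - y_v \le x_v$, since $y_v\ge 0$. If $v=a\in A$, then $x_a=\lambda_a$; for $w\in V$ we have $x_w = \psi_\Gamma(y)_w = \max_{W\in\MW_w}\Sigma(W)(y)$, so $\alpha_e x_w + c_e = \max_{W\in\MW_w}(\alpha_e\Sigma(W)(y)+c_e)\le\lambda_a$ directly by \Cref{def:network-ab}; and for $w\in A$ the length-zero walk $(w)\in\MW_w$ has $\Sigma((w))(y)=\lambda_w$, so the same inequality reads $\alpha_e\lambda_w + c_e\le\lambda_a$. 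Hence $x\in\Ord(\Gamma)$, which establishes the equivalence and, with the opening reductions, proves the theorem.

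All the estimates are short once \Cref{thm:inverse-transfer} and \Cref{lem:psi-bound} are available, so I do not anticipate a genuine obstacle; the care needed is purely bookkeeping --- keeping straight the two kinds of edges (those leaving a marked node, constrained directly through the marking and \eqref{eqn:distr_ineqs}, versus those leaving an unmarked node, governed by the $\psi_\Gamma$-recursion), and remembering that the length-zero walks $(w)$ for $w\in A$ must be counted among the $\MW_w$ so that edges between two marked nodes are handled by \Cref{def:network-ab}. As a bonus one then sees that $\Chain(\Gamma)$ is in fact anti-blocking: every coefficient occurring in $\Sigma(W)$ is a product of the positive weights $\alpha_e$, so lowering a coordinate of a point $y\ge 0$ only decreases each $\Sigma(W)(y)$ and hence preserves all constraints of \Cref{def:network-ab}.
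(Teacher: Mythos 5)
Your proof is correct and follows essentially the same route as the paper: both directions rest on \Cref{lem:psi-bound} and the bijectivity statement of \Cref{thm:inverse-transfer}, with the forward inclusion combining the lemma with the edge inequality $\alpha_e x_w + c_e \le \lambda_a$ at the marked node, and the reverse inclusion splitting into the cases $v\in V$ (where $y_v\ge 0$ gives the bound) and $v\in A$ (where the defining inequalities of $\Chain(\Gamma)$ apply directly, including the length-zero walks for edges between marked nodes). The only cosmetic difference is that you invoke the recursion for $\psi_\Gamma$ where the paper prepends the edge $v\xrightarrow{e}W$ to an explicit maximizing walk --- the same computation packaged differently.
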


\begin{proof}
    To show that \(\phi_\Gamma(\Ord(\Gamma))\subseteq\Chain(\Gamma)\), let
    \(y=\phi_\Gamma(x)\) for \(x\in\Ord(\Gamma)\). By definition of
    \(\phi_\Gamma\) we have \(y_v\ge 0\) for \(v\in V\).  Now let
    \(a\lxrightarrow{e}W\) be a walk with \(a \in A\) and \(W\in\MW\).  It follows
    from \Cref{lem:psi-bound} that
    \begin{equation*}
        \alpha_e \Sigma(W)(y) + c_e \le \alpha_e x_v +c_e \le x_a = \lambda_a.
    \end{equation*}
    Now let \(y\) be any point in \(\Chain(\Gamma)\) and let \(x=\psi_\Gamma(y)\).
    For any edge \(v\lxrightarrow{e}w\) we have to show that \(\alpha_e x_w + c_e\le x_v\).
    Let \(W\in\MW_w\) be a walk starting in \(w\) constructed as
    in the proof of \Cref{thm:inverse-transfer} such that \(\Sigma(W)(y)=x_w\).
    If \(v\in V\), we can again appeal to \Cref{lem:psi-bound} together with $y \ge 0$ to
    obtain
    \begin{equation*}
        \alpha_e x_w + c_e = \alpha_e \Sigma(W)(y) + c_e
        = \Sigma(v\xrightarrow{e}W)(y) - x_v \le \Sigma(v\xrightarrow{e}W)(y) \le x_v.
    \end{equation*}
    Otherwise, if \(v\in A\), the walk \(v\lxrightarrow{e}W\) appears in
    \Cref{def:network-ab}, so that
    \begin{equation*}
        \alpha_e x_w + c_e = \alpha_e \Sigma(W)(y) + c_e \le \lambda_v = x_v.
        \qedhere
    \end{equation*}
\end{proof}

Fulkerson~\cite{Ful71} introduced anti-blocking polyhedra and gave the
following characterization.

\begin{prop}[\cite{Ful71}] \label{prop:AB}
    A polyhedron $Q \subseteq \R^d_{\ge 0}$ is anti-blocking if and only if
    there are $a_1,\dots,a_m \in \R^d_{\ge 0}$ and $b_1,\dots,b_m \in \R_{\ge
    0}$ such
    that 
    \[
    Q \ = \ \{ x \in \R^d_{\ge 0} : a_i^t x \le b_i \text{ for } i=1,\dots,m
    \} \, .
    \]
\end{prop}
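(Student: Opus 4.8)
The plan is to prove the two implications separately; the backward implication is immediate, and for the forward one I would reduce everything to a single truncation argument. First, for the ``if'' direction: given $Q = \{x \in \R^d_{\ge 0} : a_i^t x \le b_i,\ i=1,\dots,m\}$ with all $a_i \in \R^d_{\ge 0}$ and $b_i \ge 0$, I would just check the anti-blocking condition directly --- if $y \in Q$ and $x \in \R^d_{\ge 0}$ with $x \le y$, then $x \ge 0$ and $a_i^t x \le a_i^t y \le b_i$ for every $i$ since the entries of $a_i$ are nonnegative, so $x \in Q$.

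For the ``only if'' direction, I would start by noting that we may assume $Q \ne \emptyset$ (the empty polyhedron being excluded by convention here), whence $0 \in Q$ --- apply the anti-blocking property to any $y \in Q$ with $x = 0 \le y$. Since $Q$ is a polyhedron, fix any finite description $Q = \{x \in \R^d : a_i^t x \le b_i,\ i=1,\dots,m\}$. The heart of the argument is the claim that any valid inequality may be replaced by its ``nonnegative part'': writing $a^+$ for the vector with $(a^+)_j = \max\{a_j,0\}$, if $a^t x \le b$ holds on $Q$ then so does $(a^+)^t x \le b$. To see this, for $x \in Q$ I would form $\bar x$ with $\bar x_j = x_j$ when $a_j \ge 0$ and $\bar x_j = 0$ when $a_j < 0$; then $0 \le \bar x \le x$ because $Q \subseteq \R^d_{\ge 0}$, so $\bar x \in Q$ by the anti-blocking hypothesis, and $(a^+)^t x = a^t \bar x \le b$. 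In the other direction, $a^t x \le (a^+)^t x$ for all $x \in \R^d_{\ge 0}$, so over the nonnegative orthant the truncated inequality implies the original.

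Applying this replacement to each defining inequality would then give $Q = \{x \in \R^d_{\ge 0} : (a_i^+)^t x \le b_i,\ i=1,\dots,m\}$: the inclusion ``$\subseteq$'' holds because every $x \in Q$ lies in $\R^d_{\ge 0}$ and satisfies the truncated inequalities, and ``$\supseteq$'' holds because $x \ge 0$ together with the truncated inequalities forces the original ones. Since $a_i^+ \ge 0$ by construction and $b_i \ge (a_i^+)^t 0 = 0$ from $0 \in Q$, this is exactly the required form. I expect the only real subtlety to be the truncation step --- making sure $Q \subseteq \R^d_{\ge 0}$ is used to place $\bar x$ coordinatewise between $0$ and $x$, and that the inner products collapse as claimed; everything else is bookkeeping, and in particular the argument needs no full-dimensionality, pointedness, or irredundancy of the chosen inequality system.
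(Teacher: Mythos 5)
Your proof is correct. Note that the paper itself gives no proof of this proposition --- it is quoted directly from Fulkerson's work with a citation --- so there is nothing internal to compare against; your truncation argument (replacing each valid inequality $a^t x \le b$ by $(a^+)^t x \le b$, justified by zeroing out the coordinates where $a_j < 0$ and invoking the anti-blocking property on the resulting point $\bar x$ with $0 \le \bar x \le x$) is the standard and essentially canonical route, and you handle the two genuine subtleties correctly: the nonemptiness convention needed to get $0 \in Q$ and hence $b_i \ge 0$, and the fact that $Q \subseteq \R^d_{\ge 0}$ is what makes $\bar x$ sit between $0$ and $x$ coordinatewise.
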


This description allows us to prove the following.

\begin{cor}
    The polyhedron \(\Chain(\Gamma)\) is anti-blocking.
\end{cor}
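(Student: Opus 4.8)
The plan is to deduce this from Fulkerson's characterization, \Cref{prop:AB}: it suffices to exhibit $\Chain(\Gamma)$ in the form $\{\,y\in\R^V_{\ge 0} : a_i^t y\le b_i,\ i=1,\dots,m\,\}$ with all coefficient vectors $a_i\in\R^V_{\ge 0}$ and all right-hand sides $b_i\in\R_{\ge 0}$. By \Cref{def:network-ab}, $\Chain(\Gamma)$ is cut out by $y\ge 0$ together with one inequality $\alpha_e\,\Sigma(W)(y)+c_e\le\lambda_a$ for each walk $a\xrightarrow{e}W$ with $a\in A$ and $W\in\MW$; since $A$ is finite and $\MW$ consists of the finitely many paths ending at a marked node and the finitely many node-distinct monocycles of $\Gamma$, this is a finite list of affine-linear inequalities, so $\Chain(\Gamma)$ is indeed a polyhedron. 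Writing $\Sigma(W)(y)=\ell_W(y)+\kappa_W$ with $\ell_W$ linear and $\kappa_W$ constant, the inequality reads $\alpha_e\,\ell_W(y)\le\lambda_a-c_e-\alpha_e\kappa_W$, so the corresponding data is $a_i=\alpha_e\cdot(\text{coefficient vector of }\ell_W)$ and $b_i=\lambda_a-c_e-\alpha_e\kappa_W$.

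The substantive step is to check that $\ell_W$ has non-negative coefficients, i.e.\ that $\Sigma(W)$ is a coordinatewise non-decreasing function of $y$. This is immediate from the explicit formulas: for a finite walk as in \Cref{def:sigma}, the coefficient of $y_v$ in $\Sigma(W)$ is a sum of products $\prod_{j<k}\alpha_{e_j}$ over the steps $k$ at which $v_k=v$, and this is $\ge 0$ because every $\alpha_e>0$; for a monocycle $W=P*C$, \Cref{prop:monocycle-sum} writes $\Sigma(W)(y)$ with coefficients of the form $\prod_{j<k}\alpha_{e_j}$ and $\tfrac{\alpha(P)}{1-\alpha(C)}\prod_{j}\alpha_{e_j}$, all non-negative since $\alpha(P)>0$ and $\alpha(C)<1$ by gainyness. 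Multiplying by $\alpha_e>0$, we get $a_i\ge 0$ for every defining inequality.

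It remains to see $b_i\ge 0$. If $\Chain(\Gamma)=\emptyset$ the claim is vacuous, since the empty set satisfies \eqref{eqn:AB}; otherwise fix any $y^\circ\in\Chain(\Gamma)$, so $y^\circ\ge 0$ and $a_i^t y^\circ\le b_i$, whence $b_i\ge a_i^t y^\circ\ge 0$ because $a_i,y^\circ\ge 0$. Thus $\Chain(\Gamma)$ has exactly the shape required by \Cref{prop:AB}, and is anti-blocking. I do not anticipate a genuine obstacle here; the only point worth a second look is the sign of the right-hand sides $b_i$, which is why I route through a point of $\Chain(\Gamma)$ rather than trying to bound $\kappa_W$ by hand. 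As an alternative one can bypass Fulkerson entirely and verify \eqref{eqn:AB} directly: $\Chain(\Gamma)\subseteq\R^V_{\ge 0}$ by definition, and if $y\in\Chain(\Gamma)$ and $0\le x\le y$, then the monotonicity of each $\Sigma(W)$ together with $\alpha_e>0$ gives $\alpha_e\Sigma(W)(x)+c_e\le\alpha_e\Sigma(W)(y)+c_e\le\lambda_a$, so $x\in\Chain(\Gamma)$.
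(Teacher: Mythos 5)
Your proof is correct and follows essentially the same route as the paper: both verify Fulkerson's characterization (\Cref{prop:AB}) by observing that the coefficients of $\Sigma(W)$ are non-negative, being products of the positive edge weights $\alpha_{e'}$, with the additional positive factor $\alpha(P)/(1-\alpha(C))$ for monocycles coming from gainyness via \Cref{prop:monocycle-sum}. You are in fact slightly more thorough than the paper's one-line argument, which does not explicitly address the sign of the right-hand sides $b_i$ required by \Cref{prop:AB}; your observation that any point $y^\circ\in\Chain(\Gamma)$ forces $b_i\ge a_i^t y^\circ\ge 0$ (or, alternatively, your direct verification of \eqref{eqn:AB} via monotonicity of the forms $\Sigma(W)$, which needs no sign condition on $b_i$ at all) closes that small gap.
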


\begin{proof}
    By definition $\Chain(\Gamma) \subseteq \R^V_{\ge 0}$.
    Furthermore, the coefficients in
    an inequality \(\alpha_e \Sigma(\gamma)(y) + c_e \le \lambda_a\) are all
    non-negative: for finite walks they are just finite products of edge
    weights \(\alpha_{e'}\) while for monocycles some of them are multiplied
    by the positive factor \(\alpha(P)/(1-\alpha(C))\) as described in
    \Cref{prop:monocycle-sum}.
\end{proof}

\begin{example}[{continuation of \Cref{ex:cyclic-injective}}]
    Recall the marked network \(\Gamma\) with two unmarked nodes depicted in
    \Cref{fig:good-example} together with the distributive polytope
    \(\Ord(\Gamma)\) and its anti-blocking image now denoted by
    \(\Chain(\Gamma)\). We label the three edges between $v$ and $w$ from top
    to bottom by $e,f,g$.

    Since \(\Gamma\) does not have marked nodes with incoming edges and all
    cycles contain only unmarked nodes, the set of monocycles \(\MW\)
    is given by the cycles with trivial acyclic beginning:
    \begin{align*}
        W_1 = v\xrightarrow{e} w\xrightarrow{g} v\xrightarrow{e} w
        \xrightarrow{g} \cdots \quad & \quad
        W_2 = v\xrightarrow{f} w\xrightarrow{g} v\xrightarrow{f} w
        \xrightarrow{g} \cdots \\
        W_3 = w\xrightarrow{g} v\xrightarrow{e} w \xrightarrow{g} v
        \xrightarrow{e} \cdots \quad & \quad
        W_4 = w\xrightarrow{g} v\xrightarrow{f} w \xrightarrow{g} v
        \xrightarrow{f} \cdots
    \end{align*}
    From \Cref{prop:monocycle-sum} with trivial acyclic beginning (\(s=1\)) we obtain
    \begin{align*}
        \Sigma(W_1)(y) = \tfrac 4 3 x_v + \tfrac 2 3 x_w \quad & \quad
        \Sigma(W_2)(y) = 2 x_v + 2 x_w - 2 \\
        \Sigma(W_3)(y) = \tfrac 2 3 x_v + \tfrac 4 3 x_w \quad & \quad
        \Sigma(W_4)(y) = x_v + 2 x_w - 1
    \end{align*}
    Hence, the inverse transfer map on \(\R^{V}\) is given by
    \begin{equation*}
        \psi_\Gamma \begin{pmatrix} y_v \\ y_w \end{pmatrix} =
        \begin{pmatrix}
            \max \{ \frac 4 3 x_v + \frac 2 3 x_w, 2 x_v + 2 x_w - 2 \} \\
            \max \{ \frac 2 3 x_v + \frac 4 3 x_w, x_v + 2 x_w - 1\}
        \end{pmatrix}.
    \end{equation*}
    Note that the linearity regions are the two half-spaces given by the
    hyperplane \(\tfrac 1 3 x_v + \tfrac 2 3 x_w = 1\) containing the dashed
    line in \Cref{subfig:good-image}.
    For the anti-blocking image \(\Chain(\Gamma)\) the only walks appearing in
    \Cref{def:network-ab} are \(2\to W_1\) and \(2\to W_2\) giving
    inequalities
    \[
        \tfrac 4 3 x_v + \tfrac 2 3 x_w \le 2 \quad\text{and}\quad 
        2 x_v + 2 x_w \le 4.
    \]
    These correspond to the two non-trivial facets in \Cref{subfig:good-image}.
\end{example}

In \Cref{ex:cyclic-non-injective}, where we have a lossy cycle and the
transfer map is not injective, the image was still an anti-blocking polytope.
However, this is not true in general: in the following example we have a lossy
cycle, an injective transfer map nevertheless, but the image
\(\phi_\Gamma(\Ord(\Gamma))\) is not anti-blocking.
\begin{example} \label{ex:notab}
    Let \(\Gamma\) be the marked network shown in \Cref{subfig:notab-network}.
    \begin{figure}
        \centering
        \subcaptionbox[]{a network \(\Gamma\)\label{subfig:notab-network}}[0.18\textwidth][c]{
        \raisebox{2em}{
        \begin{tikzpicture}[xscale=.6,yscale=1.5]
            \path (0,2) node[posetelmm] (C) {} node[above=2pt,marking] {\(3\)};
            \path (1,1) node[posetelm] (A) {} node[right=2pt,elmname] {\(w\)};
            \path (-1,1) node[posetelm] (B) {} node[left=2pt,elmname] {\(v\)};
            \draw[netedge] (A) to[bend left=10] node[midway, below=-2pt] {\(\scriptstyle 2,-4\)} (B);
            \draw[netedge] (B) to[bend left=10] node[midway, above=-2pt] {\(\scriptstyle 2,-4\)} (A);
            \draw[netedge] (C) to[bend right=30] (B);
            \draw[netedge] (C) to[bend left=30] (A);
        \end{tikzpicture}}} \hfill
        \subcaptionbox[]{the polyhedron \(\Ord(\Gamma)\)\label{subfig:notab-d}}[0.38\textwidth][c]{
        \begin{tikzpicture}[scale=1.2]
            \draw[thin, black!20] (0,3) -- (3,3) -- (3,0);
            \fill[black!6] (-.5,1.75) -- (2,3) -- (3,3) -- (3,2) -- (1.75,-.5) -- (-.5,-.5) -- cycle;
            \draw[thick] (-.5,1.75) -- (2,3) -- (3,3) -- (3,2) -- (1.75,-.5);
            \draw (-.5,0) -- (3.5,0) node [right] {\(x_v\)};
            \draw (0,-.5) -- (0,3.5) node [above] {\(x_w\)};
            \draw[blue,dashed, thick] (0,2) -- (3,2);
            \draw[blue,dashed, thick] (2,0) -- (2,3);
            \draw (0,2) node[left,yshift=2pt] {\(2\)};
            \draw (0,3) node[left] {\(3\)};
            \draw (2,0) node[below,xshift=1pt] {\(2\)};
            \draw (3,0) node[below] {\(3\)};
        \end{tikzpicture}} \hfill
        \subcaptionbox[]{the image \(\phi_\Gamma(\Ord(\Gamma))\)\label{subfig:notab-image}}[0.38\textwidth][c]{
        \begin{tikzpicture}[scale=.65]
            \fill[black!6] (0,6.5) -- (6.5,6.5) -- (6.5,0) -- (3,0) -- (1,1) -- (0,3) -- cycle;
            \draw[thin, black!20] (0,2) -- (2,2) -- (2,0);
            \draw[thin, black!20] (0,1) -- (1,1) -- (1,0);
            \draw[thick] (6.5,0) -- (3,0) -- (1,1) -- (0,3) -- (0,6.5);
            \draw (-.5,0) -- (6.5,0) node [right] {\(x_v\)};
            \draw (0,-.5) -- (0,6.5) node [above] {\(x_w\)};
            \draw[blue,dashed, thick] (0,6) -- (3,0);
            \draw[blue,dashed, thick] (0,3) -- (6,0);
            \draw (0,1) node[left] {\(1\)};
            \draw (0,2) node[left] {\(2\)};
            \draw (0,3) node[left] {\(3\)};
            \draw (0,6) node[left] {\(6\)};
            \draw (1,0) node[below] {\(1\)};
            \draw (2,0) node[below] {\(2\)};
            \draw (3,0) node[below] {\(3\)};
            \draw (6,0) node[below] {\(6\)};
        \end{tikzpicture}}
        \caption[Marked network and associated polyhedra from \Cref{ex:notab}]{The marked network \(\Gamma\) of \Cref{ex:notab} with the associated distributive polyhedron and its non-anti-blocking image under the transfer map.}
        \label{fig:notab-example}
    \end{figure}
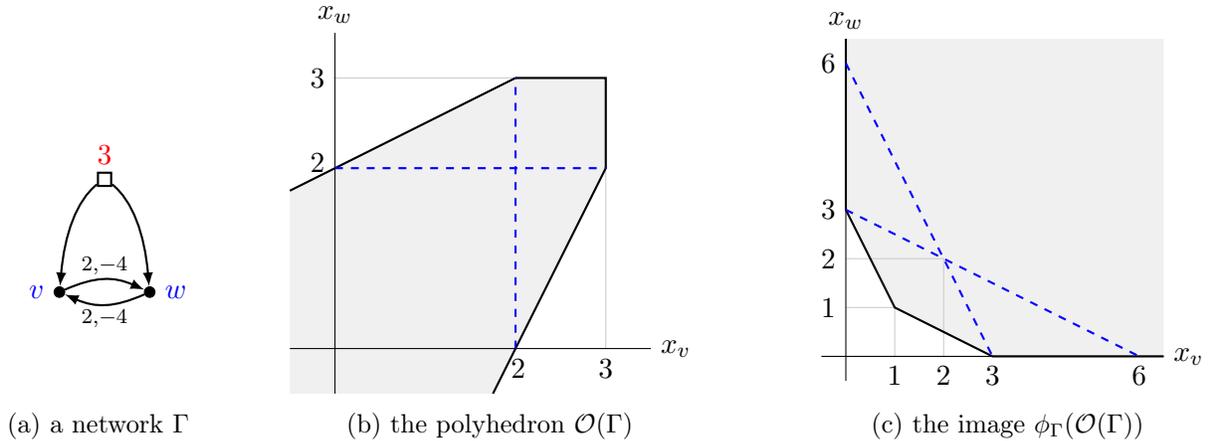
    The distributive polyhedron \(\Ord(\Gamma)\) is the unbounded polyhedron in \Cref{subfig:notab-d} given by the inequalities \(2x_v-4\le x_w\), \(2x_w-4\le x_v\), \(x_v\le 3\) and \(x_w\le 3\).
    The transfer map is given by
    \begin{equation*}
        \phi_\Gamma \begin{pmatrix} x_v \\ x_w\end{pmatrix} =
        \begin{pmatrix} x_v - 2x_w + 4 \\ x_w - 2x_v + 4\end{pmatrix}.
    \end{equation*}
    Thus, the image \(\phi_\Gamma(\Ord(\Gamma))\) is the polyhedron given by
    inequalities \(0\le y_v\), \(0\le y_w\), \(y_v + 2 y_w \ge 3\) and \(2 y_v
    + y_w \ge 3\).  It is depicted in \Cref{subfig:notab-image} and is not an
    anti-blocking polyhedron.  In fact it is what is called a \Def{blocking
    polyhedron} in \cite{Ful71}: it is given given by inequalities \(x_i\ge
    0\) for all coordinates together with inequalities of the form $a^tx \ge
    1$  with $a \in \R^n_{\ge0}$.
\end{example}

\subsection{Duality}

Let $Q \subseteq \R^n$ be a distributive polyhedron. It follows from the
definition that $-Q$ is distributive as well. If $Q = \Ord(\Gamma)$, then $-Q
= \Ord(\Gamma^\op)$, where $\Gamma^\op = (V \uplus A, E',
\alpha',c',\lambda')$ is the \Def{opposite network} with edges $w\lxrightarrow{e'}v$
for each edge $v\lxrightarrow{e}w$ in $\Gamma$, 
weights $\alpha_{e'} =
\frac{1}{\alpha_e}$, $c_{e'} = \frac{c_e}{\alpha_e}$, and $\lambda' =
-\lambda$. If $(P,\preceq)$ is a poset, then $-\Ord(P)$ is, up to a
translation, the order polytope $\Ord(P^\op)$, where $P^\op$ is the opposite
poset.

If $\Gamma$ is a network with all \emph{sources} marked, then
$\phi^{\op}_\Gamma \colon \R^V \to \R^V$ given by $\phi^{\op}_\Gamma(x) \coloneqq
\phi_{\Gamma^\op}(-x)$ is a piecewise-linear map. More precisely, it is given by
\begin{equation*}
    \phi^\op_\Gamma(x)_v  \ = \ -x_v + \min_{w\ixrightarrow{e}v} \left(
    \tfrac{1}{\alpha_e} x_w - \tfrac{c_e}{\alpha_e} \right) .
\end{equation*}
If $\Gamma$ has only lossy cycles, then $\phi^\op_\Gamma$ is bijective and
restricts to a homeomorphism $\Ord(\Gamma) \to \Chain(\Gamma^\op)$.

When $\Gamma$ is acyclic and both all sinks and all sources are marked, we can compare
the anti-blocking polyhedra $\Chain(\Gamma)$ and $\Chain(\Gamma^\op)$. If $\Gamma$ is
the Hasse diagram of a poset, we have $\Chain(\Gamma)=\Chain(\Gamma^\op)$ as a consequence
of the opposite poset having the same comparability graph. By comparing the
defining inequalities of the two polyhedra in the general case, we can see that
this observation still holds for arbitrary acyclic marked networks with all
sinks and sources marked.

\section{Applications and questions}\label{sec:apps}

\subsection{Cayley polytopes}
Recall that the Cayley polytope \(C_n\) is the distributive polytope
\(\Ord(\Gamma)\) associated to the marked network in
\Cref{fig:cayley-network}.  The geometric bijection in~\cite{KP2} is a linear
transformation \(\phi^{-1} \colon C_n \to \mathbf Y_n\), where $\mathbf{Y}_n$
is an anti-blocking polytope defined in the introduction. This map is exactly
the transfer map \(\psi^\op_\Gamma \colon \Chain(\Gamma)\to\Ord(\Gamma)\).

\subsection{Lecture hall order cones and polytopes}
The $s$-lecture hall cones and polytopes of Bousquet-M\'elou and
Eriksson~\cite{BE97, BE97a} and Stanley's
$P$-partitions~\cite[Sect.~3.15]{EC1} were elegantly combined in~\cite{BL} to
\emph{lecture hall order cones/polytopes}. Here we briefly sketch a
generalization to a marked version. Let \((P,\preceq, \lambda)\) be a
marked poset with $\lambda \in \R^A$ for $A\subseteq P$. For any $s \in \R^P_{>0}$, define  the
\Def{marked lecture hall order polyhedron} \(\Ord(P,\lambda,s)\) as the set of
points $x \in \R^{P\setminus A}$
\begin{equation*}
    \frac{x_p}{s_p} \ \le \ \frac{x_q}{s_q} \quad\text{for \(p\prec q\)},
\end{equation*}
where we set $x_a = s_a \lambda_a$ for $a \in A$. If $s \equiv 1$, then
\(\Ord(P,\lambda,s)\) is the marked order polyhedron \(\Ord(P,\lambda)\).  When
\(P\) is the linear poset \(\zerohat\prec p_1\prec\cdots\prec p_n\) and
\(\lambda_{\zerohat}=0\), we recover the \(s\)-lecture hall cones and adding a
maximal element $\onehat$ with marking \(\lambda_{\onehat}=1\) we get the
\(s\)-lecture hall polytopes.

Note that \(\Ord(P,\lambda,s)=\Ord(\Gamma)\) for the marked network given by
the Hasse diagram of $P$ with edge weights $c\equiv 0$ and
$\alpha_e=\tfrac{s_q}{s_p}$ for an edge $e$ given by a covering relation
$p\prec q$. We may also express \(\Ord(P,\lambda,s)\) as a linear
transformation \(T_s(\Ord(P,\lambda))\) of the usual marked order polyhedron,
where \(T_s(x)_p = s_p x_p\). This transformation is compatible with the
transfer maps associated to \(\Ord(P,\lambda)\) and
\(\Ord(P,\lambda,s)=\Ord(\Gamma)\) in the sense that $T_s\circ
\phi_{(P,\lambda)} = \phi_\Gamma \circ T_s$. If $\lambda$ and $s$ are integral, then the
marked lecture hall order polytopes are lattice polytopes. Furthermore,
if \(s\) satisfies $s_p \mid s_q$ for $p \prec q$, then the transfer map
is lattice preserving.

\subsection{Coordinates in polytopes} The geometric
reformulation~\eqref{eqn:geom_phi} admits the following generalization: Given
a polyhedron $Q \subseteq \R^d$ and vectors $U = (u_1,\dots,u_m) \in \R^{d\times
m}$. Define $\phi_{Q,U} \colon Q \to \R^m$ by
\[
    \phi_{Q,U}(x)_i  \ \coloneqq \ \max( \mu \ge 0 : x - \mu v_i \in Q ) \, .
\]

\begin{quest}
    For which $(Q,U)$ is $\phi_{Q,U}$ injective? When is the image convex?
\end{quest}

If $\phi_{Q,U}$ is injective and convex, then its image gives a representation
of $Q$ up to translation, akin to its \emph{slack representation};
see~\cite[Sect.~3.2]{Kaibel}. Our results show that $\phi_\Gamma$ yields a class
of examples for gainy networks. However, \Cref{ex:notab} shows that even for
some distributive polyhedra associated to marked networks with non-gainy
cycles the transfer map can still be injective.

\subsection{Continuous families}
In~\cite{XinFourier}, Fang and Fourier generalized the marked poset polytopes
\(\Ord(P,\lambda)\) and \(\Chain(P,\lambda)\) to a discrete family of marked
chain-order polytopes \(\Ord_{C,O}(P,\lambda)\).  It is parametrized by
partitions \(P\setminus A = C\uplus O\) such that $C=\varnothing$ yields the
order polytope, which is distributive, and $O=\varnothing$ yields the chain
polytope, which is anti-blocking.  When both $C$ and $O$ are non-empty,
\(\Ord_{C,O}(P,\lambda)\) is neither distributive nor anti-blocking in
general.

\begin{definition}
    Let \(D\) and \(A\) be finite sets. A polyhedron \(Q\subseteq
    \R^D\times\R_{\ge 0}^A\) is called \Def{mixed distributive anti-blocking}
    if it satisfies the following properties:
    \begin{enumerate}[noitemsep,label=\roman*)]
        \item given \((x,z)\in Q\) and \((y,z)\in Q\), we have \((x \wedge y,z)\in Q\) and \((x \vee y,z)\in Q\),
        \item when \((x,z)\in Q\) and \(0\le y\le z\), then \((x,y)\in Q\).
    \end{enumerate}
\end{definition}
When \(A=\varnothing\) or \(D=\varnothing\), this recovers the notions of distributive and anti-blocking polyhedra, respectively.
The marked chain-order polytopes are then mixed distributive anti-blocking with respect to the decomposition \(\R^{P\setminus A} = \R^O \times \R^C\).

This discrete family of marked chain-order polytopes has been embedded into a
continuous family of polytopes \(\Ord_t(P,\lambda)\) parametrized by
$t\in[0,1]^{P\setminus A}$ in \cite{FFLP}.  The marked chain-order polytopes
are obtained for characteristic functions $t=\chi_C$. These polytopes are all
obtained as images of the marked order polytope \(\Ord(P,\lambda)\) under
parametrized transfer maps
\[
    \phi_t(x)_p \ \coloneqq\ x_p - t_p\cdot \max_{q\prec p} x_q.
\]

Hence, it is natural to ask whether we can obtain an analogous continuous family of polyhedra associated to marked networks.
\begin{quest}
    Does introducing a parameter \(t\in[0,1]^V\) in the transfer map of
    distributive polyhedra associated to gainy marked networks with marked
    sinks yield a continuous family of polyhedra such that
    \begin{enumerate}[label=\roman*)]
        \item the combinatorial type of the images is constant along relative
            interiors of the parametrizing hypercube and
        \item the polyhedra at the vertices of the hypercube are mixed
            distributive anti-blocking?
    \end{enumerate}
\end{quest}

\subsection{Domains of linearity, faces, Minkowski summands}
At the end of \Cref{sec:inverse}, we gave an idea of the domains of
linearity of $\phi_\Gamma$. They are related to rooted forests with cycles
attached to some leafs. Stanley~\cite{Stanley} considered a refined
subdivision of $\Ord(P)$ that had the property of being unimodular.  For
marked order polytopes a corresponding subdivision was described in~\cite{JS}
in terms of products of dilated unimodular simplices. In the general case with
arbitrary weights it is not clear if such fine subdivisions exist.
\begin{quest}
    Do distributive polyhedra admit a natural subdivision into products of
    simplices on which the transfer map is linear?
\end{quest}

The face structure of marked order polyhedra can be described by so-called
face partitions~\cite{Stanley,JS,Pegel}. The question of describing the
vertices of $\Ord(\Gamma)$ was also raised in~\cite{FK11}.

\begin{quest}
    Give a combinatorial description of the faces of $\Ord(\Gamma)$ in terms
    of the underlying network.
\end{quest}

In~\cite{Fou16,Pegel}, a marked poset is called \Def{regular} if the
inequalities derived from the cover relations are irredundant (or
facet-defining).

\begin{quest}
    When is a marked network \emph{regular}?
\end{quest}

Finally, polyhedra may be decomposed into Minkowski summands. For marked order
polyhedra this was done in~\cite{JS,Pegel}, for marked chain-order polyhedra
in \cite{XinFourier,FFP}.

\begin{quest}
    Is there a Minkowski sum decomposition of distributive polyhedra similar
    to the one for marked order polyhedra and marked chain-order polyhedra?
\end{quest}

\bibliographystyle{siam}
\bibliography{DistribPolyhedra}

\end{document}